\documentclass[11pt,a4paper,twoside]{article}
\usepackage{amsthm,amsmath,amssymb,graphics, amscd}
\usepackage{graphicx}
\usepackage{psfrag}
\newcommand{\field}[1]{\mathbb{#1}}
\newcommand{\R}{\field{R}}

\newcommand{\Hp}{\field{H}}

\newcommand{\Pe}{\field{P}}
\newcommand{\Q}{\field{Q}}

\theoremstyle{plain}
\newtheorem{theorem}{Theorem}[section]
\newtheorem{proposition}[theorem]{Proposition}
\newtheorem{lemma}[theorem]{Lemma}
\newtheorem{claim}[theorem]{Claim}
\newenvironment{rem}{\bf Remark.\rm}{~\hfill~$\diamond$}

\newenvironment{outline}{\noindent\bf Outline of the proof.\rm}{~\hfill~$\Box$\\}
\textheight=23cm
\textwidth=15,4cm
\topmargin 0cm
\evensidemargin 0,3cm
\oddsidemargin 0,3cm

\begin{document}

\begin{centerline}
{\bf {\LARGE An extension of the Masur domain}}
\end{centerline}
\
\\\
\begin{center}
{\sc {Cyril LECUIRE}}\\
\end{center}
\
\\\
\begin{abstract}
The Masur domain is a subset of the space of projective measured geodesic laminations on the boundary of a $3$-manifold $M$. This domain plays an important role in the study of the hyperbolic structures on the interior of $M$. In this paper, we define an extension of the Masur domain and explain that it shares a lot of properties with the Masur domain.
\end{abstract}

\section{Introduction}
\indent
A compression body is the connected sum along the boundary of a ball of $I$-bundles over closed surfaces and solid tori. Among the compression bodies are the handlebodies which are the connected sums along the boundary of solid tori $D^2\times S^1$.  If $M$ is a compression body and if $\partial M$ has negative Euler characteristic then, by Thurston hyperbolization theorem, its interior admits a hyperbolic structure. Namely there are discrete faithful representations $\rho:\pi_1(M)\rightarrow Isom(\Hp^3)$ such that $\Hp^3/\rho(\pi_1(M))$ is homeomorphic to the interior of $M$. If such a representation $\rho$ is geometrically finite, it is said to uniformize $M$.\\
\indent
 In \cite{masur}, H. Masur studied the space of projective measured foliations on the boundary of a handlebody. He described the limit set of the action of the modular group on this space and defined a subset of the space of projective measured foliations on which this action is properly discontinuous. In \cite{chefjeune}, J.-P. Otal defined a similar subset ${\cal O}$ of the space of projective measured geodesic laminations on the boundaries of compression bodies. This set ${\cal O}\subset{\cal PML}(\partial M)$ is called the Masur domain and J.-P. Otal showed that the action of the modular group on ${\cal O}$ is properly discontinuous. He also proved the following : if $int(M)$ is endowed with a convex cocompact hyperbolic metric, then any projective class of measured geodesic laminations lying in ${\cal O}$ is realized by a pleated surface. He also showed that the  injectivity theorem of \cite{thui} applies for such pleated surfaces.\\
\indent
Later it was shown that the projective classes of measured laminations in ${\cal O}$ are an analogous of what Thurston called binding laminations on $I$-bundles over closed surfaces. Namely if we have a sequence of geometrically finite representations\linebreak $\rho_n:\pi_1(M)\rightarrow Isom(\Hp^3)$ uniformizing a compression body and a measured geodesic lamination $\lambda\in{\cal O}$ such that $l_{\rho_n}(\lambda)$ is bounded, then the sequence $(\rho_n)$ contains\linebreak an algebraically converging subsequence. This property has been obtained for various cases in \cite{thuii}, \cite{conti}, \cite{canary}, \cite{ohsh} and the general statement comes from \cite{boches} and \cite{boches1}.\\

\indent
In  this paper, we allow $M$ to be any orientable $3$-manifold with boundary satisfying the following : the Euler characteristic of $\partial M$ is negative and the interior of $M$ admits a complete hyperbolic metric. We will consider the following set :\medskip\\
\indent
${\cal D}(M)=\{\lambda\in{\cal ML}(\partial M)|\,\exists\eta>0$  such that $i(\lambda,\partial E)>\eta$ for any essential annulus or disc $E\subset M\}$.\\

\indent 
First we will link this set ${\cal D}(M)$ with the result of \cite{espoir} and deduce from this that the support of a geodesic measured lamination lying in ${\cal D}(M)$ is also the support of a (in fact many) bending measured geodesic lamination of a representation uniformizing $M$. Using the continuity of the bending measure proved in \cite{kes} and \cite{bodiff}, we will show that ${\cal D}(M)$ is connected. It follows from the ideas of \cite{chefjeune} that the projection of ${\cal D}(M)$ on ${\cal PML}(\partial M)$ contains ${\cal O}$ and we will use this to show that the Masur domain is connected.\\

\indent
After that, we will prove that the set  ${\cal D}(M)$ has the following properties :\medskip\\
\indent If $int(M)$ is endowed with a convex cocompact hyperbolic metric, any measured geodesic lamination lying in ${\cal D}(M)$ is realized by a pleated surface and such a pleated surface satisfies the injectivity theorem of \cite{thui}.\medskip\\
\indent If $\rho_n$ is a sequence of geometrically finite metrics uniformizing $M$ and $\lambda\in{\cal D}(M)$ is a measured geodesic lamination such that $l_{\rho_n}(\lambda)$ is bounded, then the sequence $(\rho_n)$ contains an algebraically converging subsequence.\\

\indent We will also discuss the action of the modular group on ${\cal D}(M)$.\\

\indent
I would like to thank F. Bonahon, I. Kim, K. Ohshika and J.-P. Otal for fruitful discussions and J. Souto who gave me the ideas of Proposition \ref{arc}.

\section{Definitions}
\subsection{Geodesic Laminations}
\indent  Let $S$ be a closed surface endowed with a complete hyperbolic metric; a  {\it geodesic lamination} on $S$ is a compact subset that is the disjoint union of complete embedded geodesics. Using the fact that two complete hyperbolic metrics on $S$ are quasi-isometric, this definition can be made independent of the chosen metric on $S$ (see \cite{conti} for example). A geodesic lamination whose leaves are all closed is called a  {\it multi-curve}. If each half-leaf of a geodesic lamination $L$ is dense in $L $, then $L $ is  {\it minimal}. Such a minimal geodesic lamination is either a simple closed curve or an  {\it irrational lamination}. A leaf $l$ of a geodesic lamination $L $ is  {\it recurrent} if it lies in a minimal geodesic lamination. Any geodesic lamination is the disjoint union of  finitely many minimal laminations and non-recurrent leaves. A leaf is said to be an  {\it isolated} leaf if it is either a non-recurrent leaf or a compact leaf without any leaf spiraling toward it.\\
\indent Let $L$ be a connected geodesic lamination which is not a simple closed curve and let us denote by $\bar S(L)$ the smallest surface with geodesic boundary containing $L$. Inside $\bar S(L)$ there are finitely many closed geodesics (including the components of $\partial\bar S(L)$)  disjoint from $L$ and these closed geodesics do not intersect each other (cf. \cite{espoir}); let us denote by $\partial'\bar S(L)\supset\partial\bar S(L)$ the union of these geodesics. Let us remove from $\bar S(L)$ a small tubular neighbourhood of $\partial'\bar S(L)$ and let $S(L)$ be the resulting surface. We will call $S(L)$ the {\it surface embraced} by the geodesic lamination $L$ and $\partial'\bar S(L)$ the {\it effective boundary} of $S(L)$. If $L$ is a simple closed curve, let us define $S(L)$ to be an annular neighbourhood of $L$ and $\partial'\bar S(L)=L$. If $L$ is not connected, $S(L)$ is the disjoint union of  the surfaces embraced by the connected components of $L$ and $\partial'\bar S(L)=\bigcup_{\{L^i\mbox{ is a component of } L\}}\partial'\bar S(L^i)$.\\

\indent
A  {\it measured geodesic lamination} $\lambda$ is a transverse measure for some geodesic lamination $|\lambda|$: any arc $k\approx [0,1]$ embedded in $S$ transversely to $|\lambda|$, such that\linebreak $\partial k\subset S-\lambda$, is endowed with an  additive measure $d\lambda$ such that :\\
\indent - the support of $d\lambda_{|k}$ is $|\lambda|\cap k$;\\
\indent - if an arc $k$ can be homotoped into $k'$ by a homotopy respecting $|\lambda|$ then\linebreak $\int_k\! d\lambda=\int_{k'} d\lambda$.\\
We will denote by ${\cal ML}(S)$ the space of measured geodesic lamination topologised with the topology of weak$^*$ convergence. We will denote by $|\lambda|$ the support of a measured geodesic lamination $\lambda$.\\
\indent
Let $\gamma$ be a weighted simple closed geodesic with support $|\gamma|$ and weight $w$ and let $\lambda$ be a measured geodesic lamination, the intersection number between $\gamma$ and $\lambda$ is defined by $i(\gamma,\lambda)=w \int_{|\gamma|} d\lambda$. The weighted simple closed curves are dense in ${\cal ML}(S)$ and this intersection number extends continuously to a function\linebreak $i:{\cal ML}(S)\times{\cal ML}(S)\rightarrow\R$ (cf. \cite{bouts}). A measured geodesic lamination $\lambda$ is {\it arational} if for any simple closed curve $i(c,\lambda)=\int_c d\lambda>0$.

\subsection{Real trees}	\label{def}
\indent
An $\R$-tree ${\cal T}$ is a metric space such any two points $x,y$ can be joined by a unique simple arc. Let $G$ be a group acting by isometries on an $\R$-tree ${\cal T}$; the action is  {\it minimal} if there is no proper invariant subtree and  {\it small} if the stabilizer of any non-degenerate arc is virtually abelian.\\
\indent
A $G$-equivariant map $\phi$ between two $\R$-trees ${\cal T}$ and ${\cal T}'$ is a {\it morphism} if and only if every point $p\in{\cal T}$ lies in a non-degenerate segment $[a,b]$ (but $p$ may be a vertex of $[a,b]$) such that the restriction $\phi_{|[a,b]}$ is an isometry. The point $p$ is a {\it branching point} if there is no segment $[a,b]$ such that $\phi_{|[a,b]}$ is an isometry and that $p\in]a,b[$.\\
\indent
Let $S$ be a connected hyperbolic surface and let $q:\Hp^2\rightarrow S$ be the covering projection. Let $L\subset S$ be a geodesic lamination and let $\pi_1(S)\curvearrowright{\cal T}$ be a minimal action of $\pi_1(S)$ on an  $\R$-tree ${\cal T}$; $L$ is realized in ${\cal T}$ if there is a continuous equivariant map $\Hp^2\rightarrow {\cal T}$ whose restriction to any lift of a leaf of $L$ is injective.\\
\indent
Let $\lambda\in{\cal ML}(S)$ be a measured geodesic lamination; following \cite{chefm}, we will define the dual tree of $\lambda$. Consider the following metric space $pre{\cal T}_{\lambda}$ : the points of $pre{\cal T}_{\lambda}$ are the complementary regions of $q^{-1}(\lambda)$ in $\Hp^2$, where $q:\Hp^2\rightarrow S$ is the covering projection and the distance $d:{\cal T}_{\lambda}\times {\cal T}_{\lambda}\rightarrow \R$ is defined as follows. Let $R_0$ and $R_1$ be two complementary regions and choose a geodesic segment $k\subset\Hp^2$ whose vertices lie in $R_0$ and $R_1$; we set $d(R_0,R_1)$ to be the $q^{-1}(\lambda)$-measure of $k$. Then, there is a unique (up to isometry) $\R$-tree ${\cal T}_\lambda$ and an isometric embedding $e:pre{\cal T}_{\lambda}\rightarrow {\cal T}_{\lambda}$ such that {\bf any point} of ${\cal T}_{\lambda}$ lies in a segment with endpoints in $e(pre{\cal T}_{\lambda})$ (cf. \cite{euh}). The covering transformations yield an isometric action of $\pi_1(M)$ on ${\cal T}_{\lambda}$; if $\delta_{\lambda}(c)$ is the distance of translation of an isometry of ${\cal T}_{\lambda}$ corresponding to a simple closed curve $c$, we have $\delta_{\lambda}(c)=i(c,\lambda)$. This construction yields a natural projection $\Hp^2-q^{-1}(\lambda)\rightarrow {\cal T}_{\lambda}$. If $\lambda$ does not have closed leaves, this projection extends continuously to a map $\pi_{\lambda}:\Hp^2\rightarrow {\cal T}_{\lambda}$. Otherwise, replacing closed leaves of $\lambda$ by foliated annuli endowed with uniform transverse measures, we get also a continuous map $\pi_{\lambda}:\Hp^2\rightarrow {\cal T}_{\lambda}$ (cf. \cite{conti}).

\subsection{Train tracks}
A  {\it train track } $\tau$ in $S$ is the union of finitely many "rectangles" $b_i$ called the  {\it branches} and satisfying:
\begin{description}
\item -  any branch $b_i$ is an imbedded rectangle $[0,1]\times [0,1]$ such that the preimage of the double points is a segment of $\{0\}\times [0,1]$ and a segment of $\{1\}\times [0,1]$;
\item -  the intersection of two different branches is either empty or a non-degenerate segment lying in the vertical sides $\{0\}\times [0,1]$ and $\{1\}\times [0,1]$;
\item -  any connected component of the union of the vertical sides is a simple arc embedded in $\partial_{\chi<0} M$.
\end{description}
\indent A connected component of the union of the vertical sides is a  {\it switch}. In each branch the segments $\{p\}\times [0,1]$ are the {\it ties} and the segments $[0,1]\times\{p\}$ are the {\it rails}.\medskip\\
\indent
A geodesic lamination $L $ is  {\it carried by a train track} $\tau$ when:
\begin{description}
\item -  $L $ lies in $\tau$;
\item -  for each branch $b_i$ of $\tau$,  $L \cap b_i$ is not empty, lies in the image of $[0,1]\times ]0,1[$ and each leaf of $L $ is transverse to the ties.
\end{description}
Notice that, in some papers, a geodesic lamination satisfying the above is said to be ``minimally carried'' by $\tau$.\\
\indent A measured geodesic lamination $\lambda$ is carried by a train track $\tau$ if its support $|\lambda|$ is carried by $\tau$.\\
\indent Let $S$ be a hyperbolic surface, let $\tau\subset S$ be a train track and let $\pi_1(M)\curvearrowright{\cal T}$ be a minimal action of $\pi_1(M)$ on an  $\R$-tree ${\cal T}$. Let $q^{-1}(\tau)\subset\Hp^2$ be the preimage of $\tau$ under the covering projection; a {\it weak realization} of $\tau$ in ${\cal T}$, is a $\pi_1(M)$-equivariant continuous map $\pi:q^{-1}(\tau)\rightarrow{\cal T}$ such that $\pi$ is constant on the ties of $q^{-1}(\tau)$, monotone and not constant on the rails and that the images of two adjacents branches lying on opposite sides of the same switch have disjoint interiors.

\subsection{$3$-manifolds}
Let $M$ be a $3$-manifold, $M$ is {\it irreducible} if any sphere embedded in $M$ bounds a ball. We will say that $M$ is a  {\it hyperbolic manifold} if its interior can be endowed with a complete hyperbolic metric. Let $\Sigma$ be a subsurface of $\partial M$; an  {\it essential disc} in $(M,\Sigma)$ is a disc $D$ properly embedded in $(M,\Sigma)$ that can not be mapped to $\partial M$ by a homotopy fixing $\partial D$. The simple closed curve $\partial D$ is a {\it meridian} curve. The manifold $M$ is {\it boundary irreducible} if there is no essential disc in $(M,\partial M)$. An  {\it essential annulus} in $(M,\Sigma)$ is an incompressible annulus $A$ properly embedded in $(M,\Sigma)$ which can not be mapped to $\partial M$ by a homotopy fixing $\partial A$. Let $A$ be an essential annulus in $M$; if one component of $\partial A$ lies in a toric component of $\partial M$ we will call the other component of $\partial A$ a  {\it parabolic curve}.\\
\indent
Let $m\subset\partial M$ be a simple closed curve; a simple arc $k$ such that $k\cap m=\partial k$ is an $m$-wave if there is an arc $k'\subset m$ such that $k'\cup k$ bounds an essential disc. A leaf $\tilde l$ of a geodesic lamination $\tilde L\subset\partial\tilde M$ is {\it homoclinic} if it contains two sequences of points $(x_n)$ and $(y_n)$ such that the distance between the points $x_n$ and $y_n$ measured on $\tilde l$ goes to $\infty$ whereas their distance measured in $\tilde M$ is bounded. A leaf $l$ of a geodesic lamination $L\subset\partial M$ is {\it homoclinic} if a (any) lift of $l$ to $\partial\tilde M$ is a homoclinic leaf. Notice that, with this definition, a meridian or a leaf spiralling around a meridian is homoclinic.\\

\indent
Let $\rho:\pi_1(M)\rightarrow Isom(\Hp^3)$ be a faithful discrete representation such that\linebreak $\Hp^3/\rho(\pi_1(M))$ is homeomorphic to the interior of $M$. Let $L_\rho\subset S^2=\partial\overline{\Hp}^3$ be the limit set of $\rho(\pi_1(M))$, let $C(\rho)\subset\Hp^3$ be the convex hull of $L_\rho$ and let $C(\rho)^{ep}$ be the intersection of $C(\rho)$ with the preimage of the thick part of $\Hp^3/\rho(\pi_1(M))$. The quotient $N(\rho)$ of $C(\rho)$ by $\rho(\pi_1(M))$ is the convex core of $\rho$ and $\rho$ is said to be {\it geometrically finite} if $N(\rho)$ has finite volume. A geometrically finite representation $\rho:\pi_1(M)\rightarrow Isom(\Hp^3)$ such that $\Hp^3/\rho(\pi_1(M))$ is homeomorphic to the interior of $M$ is said to {\it uniformize} $M$. If $\rho$  uniformize $M$, there is a natural homeomorphism (defined up to homotopy) $h:\tilde M\rightarrow C(\rho)^{ep}$ coming from the retraction map $S^2-L_{\rho}\rightarrow C(\rho)^{ep}$. Let us choose a geometrically finite representation $\rho$ with only rank $2$ maximal parabolic subgroups (namely the maximal subgroups of $\rho(\pi_1(M))$ containing only parabolic isometries have rank $2$). We will define the compactification $\overline{\tilde M}$ of $\tilde M$ as the closure of $h(\tilde M)=C(\rho)^{ep}$ in the usual unit ball compactification of $\Hp^3$. This compactification does not depend on the choice of the representation $\rho$ (see \cite[section 2.1]{espoir}). We will call this compactification the Floyd-Gromov compactification of $\tilde M$.\\
\indent
Let $\tilde l_+\subset\partial\tilde M$ be a half-geodesic and let $\Bar{\tilde l}_+$ be its closure in $\overline{\tilde M}$; we will say that $\tilde l_+$ {\it has a well defined endpoint} if $\Bar{\tilde l}_+-\tilde l_+$ contains one point. We will say that a geodesic $\tilde l\subset\partial\tilde M$ has two well defined endpoints if $\tilde l$ contains two disjoints half geodesics each having a well defined endpoint. Two distincts leaves $\tilde l_1$ and $\tilde l_2$ of a geodesic lamination $\tilde L\subset\partial\tilde M$ will be said to be {\it biasymptotic} if they both have two well defined endpoints in $\overline{\tilde M}$ and if the endpoints of $\tilde l_1$ are the same as the endpoints of $\tilde l_2$. A geodesic lamination $A\subset\partial M$ is {\it annular} if the preimage of $A$ in $\partial\tilde M$ contains a pair of biasymptotic leaves.\\

\subsection{Pleated surfaces}
Let $\rho:\pi_1(M)\rightarrow Isom(\Hp^3)$ be a discrete faithful representation and let $N=\Hp^3/\rho(\pi_1(M)$. A  {\it pleated surface} in $N$ is a map $f:S\rightarrow N$  from a surface $S$ to $N$ with the following properties :
\begin{description}
\item - the path metric obtained by pulling back the hyperbolic metric of $N$ by $f$ is a hyperbolic metric $s$ on $S$;
\item - every point in $S$ liess in the interior of some $s$-geodesic arc that is mapped to a geodesic arc in $N$;
\end{description}
\indent
The  {\it pleating locus} of a pleated surface is the set of points of $S$ where the map fails to be a local isometry. The pleating locus of a pleated map is a geodesic lamination (cf. \cite{notes}).\\
\indent
Let $\rho:\pi_1(M)\rightarrow Isom(\Hp^3)$ be a discrete faithful representation such that there is a homeomorphism $h:int(M)\rightarrow N=\Hp^3/\rho(\pi_1(M))$ and let $S\subset M$ be a properly embedded surface homeomorphic and homotopic to $\partial M$. A measured geodesic lamination $\lambda\in{\cal ML}(\partial M)$ is realized by a pleated surface in $N$ if there is a pleated surface $f:S\rightarrow N$ homotopic to $h_{|S}$ such that the restriction of $f$ to the support of $\lambda$ is an isometry. 

\subsection{Masur domain}
Let $M$ be a compression body; its boundary has a unique compressible component, the {\it exterior boundary} that we will denote by  $\partial_e M$. Let ${\cal PML}(\partial_e M)$ be the space of projective measured geodesic laminations on $\partial_e M$ and let ${\cal M}'$ be the closure in ${\cal PML}(\partial_e M)$ of the set of projective classes of weighted meridians. The compression body $M$ is said to be a {\it small compression body} if it is the connected sum along the boundary of two $I$-bundles over closed surfaces or of a solid torus and of an $I$-bundle over a closed surface  and is said to be a {\it large compression body} otherwise.  When $M$ is a large compression body, the Masur domain is defined as follows :\\
$${\cal O}=\{\lambda\in{\cal PML}(\partial_e M)|i(\lambda,\mu)>0 \mbox{ for any }\mu\in{\cal M'}\}.$$
\indent When $M$ is a small compression body, the definition is the following one \medskip\\
\indent ${\cal O}=\{\lambda\in{\cal PML}(\partial_e M)|\, i(\lambda,\nu)>0$ for any $\nu\in{\cal PML}(\partial_e BC)$ such that there is $\mu\in{\cal M'}$ with $ i(\mu,\nu)=0\}.$\medskip\\
We will denote by $\hat {\cal O}\subset{\cal ML}(\partial M)$ the set of measured geodesic laminations whose projective class lies in ${\cal O}$.\\
\indent Let $M$ be an orientable hyperbolic $3$-manifold such that $\partial M$ has negative Euler characteristic.  We will say that a measured geodesic lamination $\lambda\in{\cal ML}(\partial M)$ is {\it doubly incompressible} if and only if :\\

\indent
- $\exists\eta>0$  such that $i(\lambda,\partial E)\geq\eta$ for any essential annulus or disc $E$.\\

We will denote by ${\cal D}(M)\subset{\cal ML}(\partial M)$ the set of doubly incompressible measured geodesic laminations.\\
\indent
Doubly incompressible multi-curve were first introduced by W. Thurston in \cite{thuiii} and we have the following equivalence :  $(\partial M, |\gamma|,\subset)$ is doubly incompressible (in the sense of \cite{thuiii}) if and only if there is a weighted multi-curve $\gamma\subset {\cal ML}(\partial M)$ with support $|\gamma|$ satisfying the condition above except in the following situation (in which $\gamma$ lies in ${\cal D}(M)$ but $(\partial M,|\gamma|,\subset)$ is not doubly incompressible in Thurston's sense):\\
\indent - $(-)$ there is a homeomorphism between $M$ and an $I$-bundle over a pair of pants $P$ such that $|\gamma|$ is mapped to a section of the bundle over $\partial P$.\\

\indent
The set ${\cal D}(M)$ of doubly incompressible measured geodesic laminations is the extension of Masur domain we will study in this paper.

\section{Relations between ${\cal O}(M)$, ${\cal D}(M)$ and ${\cal P}(M)$}
When a statement deals with the Masur domain, it means that we have assumed that $M$ is a compression body.

\begin{lemma}	\label{cont}
The set $\hat{\cal O}$ is a subset of ${\cal D}(M)$.
\end{lemma}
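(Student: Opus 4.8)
The plan is to verify the two clauses of double incompressibility for $\lambda\in\hat{\cal O}$ separately: positivity, $i(\lambda,\partial E)>0$ for every essential disc or annulus $E$, and then the uniform lower bound $\eta$. Since $|\lambda|\subset\partial_e M$, only the components of $\partial E$ lying on $\partial_e M$ contribute to $i(\lambda,\partial E)$; I write $\beta_E$ for this sub-multicurve, which is non-empty for every essential $E$ by the structure of compression bodies. The whole argument rests on comparing $\beta_E$ with the meridian set ${\cal M}'$, so the first thing to record is that, because $i(\mu,\mu)=0$ for every measured lamination $\mu$, both definitions of ${\cal O}$ force $i(\lambda,\mu)>0$ for all $\mu\in{\cal M}'$; in the small case one has in addition $i(\lambda,\nu)>0$ whenever $\nu$ satisfies $i(\mu,\nu)=0$ for some $\mu\in{\cal M}'$.

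Positivity I would obtain from a structural statement locating $\beta_E$ relative to ${\cal M}'$. If $E$ is a disc then $\beta_E=\partial E$ is a meridian, so $[\beta_E]\in{\cal M}'$ and $i(\lambda,\partial E)>0$ is immediate. If $E$ is an annulus the two cases diverge. When $M$ is small I would run an innermost-disc and innermost-arc argument: an essential annulus is boundary-incompressible, so a compressing disc can be isotoped off $E$, producing a meridian $\mu$ with $i(\mu,\beta_E)=0$; then $\beta_E$ is one of the laminations $\nu$ occurring in the small-compression-body definition, whence $i(\lambda,\beta_E)>0$. When $M$ is large I would instead use that the Dehn twist $\tau_A$ along the annulus $A$ extends to a homeomorphism of $M$ and so permutes meridians; it acts on $\partial_e M$ as a multitwist along $|\beta_E|$, and for a meridian $\mu$ with $i(\mu,\beta_E)>0$ the curves $\tau_A^{\,n}(\mu)$ are meridians whose projective classes converge to a measured lamination $\nu_E$ carried by $|\beta_E|$. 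Thus $[\nu_E]\in{\cal M}'$, so $i(\lambda,\nu_E)>0$, which in turn forces $i(\lambda,\beta_E)>0$.

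For the uniform bound I would argue by compactness. Fix a hyperbolic metric on $\partial_e M$, with length function $l(\cdot)$ and systole $s>0$. In the large case set $K=\{\mu\in{\cal ML}(\partial_e M):[\mu]\in{\cal M}',\ l(\mu)=1\}$; this is compact, and $i(\lambda,\cdot)$ is continuous and strictly positive on it, hence bounded below by some $m_0>0$. Normalising the lamination $\nu_E$ of the previous paragraph to unit length makes it an element of $K$; writing $\nu_E=\sum_j\alpha_j c_j$ on the components $c_j$ of $\beta_E$, the unit-length condition gives $\alpha_j\le 1/s$, so
\[
m_0\le i(\lambda,\nu_E)=\sum_j\alpha_j\,i(\lambda,c_j)\le\frac{1}{s}\sum_j i(\lambda,c_j)=\frac{1}{s}\,i(\lambda,\partial E),
\]
and therefore $i(\lambda,\partial E)\ge s\,m_0=:\eta>0$, uniformly in $E$. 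In the small case the identical computation applies with ${\cal M}'$ replaced by the set $Z=\{\nu:\exists\,\mu\in{\cal M}',\ i(\mu,\nu)=0\}$, taking $\nu_E=\beta_E$ itself; here one uses that $Z$ is closed—so that its unit-length slice is compact—which holds because a limit of the witnessing meridians survives by compactness of ${\cal M}'$ and continuity of the intersection form.

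The main obstacle is the structural statement of the second paragraph for annuli. The disc case and the passage to a uniform $\eta$ are routine once positivity is known on the relevant compact set; the real content is controlling $\beta_A$. For small $M$ this is the boundary-incompressibility argument producing a meridian disjoint from $\beta_A$, and it is precisely to accommodate such $\beta_A$ that the small-compression-body definition of ${\cal O}$ is strengthened. For large $M$ the point is the existence of a meridian meeting $\beta_A$ together with the convergence $\tau_A^{\,n}(\mu)\to\nu_E$; checking that such a crossing meridian always exists, equivalently that $\beta_A$ is not disjoint from the full disc set, uses the characterisation of large compression bodies and is where the bulk of the topological work lies.
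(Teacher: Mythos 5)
Your compactness packaging of the uniformity step (unit-length slice of ${\cal M}'$, respectively of $Z$, is compact; $i(\lambda,\cdot)$ is positive and continuous there; the systole bounds the weights) is correct, and is a legitimate substitute for the paper's rescaled-sequence argument; likewise the direct rather than contrapositive organization is immaterial. The genuine gap is the structural input you yourself flag at the end: in a large compression body it is \emph{not} true that every essential annulus is crossed by a meridian, so your only mechanism for producing $\nu_E\in{\cal M}'$ supported on $\beta_E$ --- twisting a crossing meridian along $E$ --- is vacuous for some annuli, and then your positivity step gives no bound at all. Concretely, let $M=(S_1\times I)\natural(S_2\times I)\natural(S_3\times I)$ with the $S_i$ closed of genus at least $2$; this is a large compression body. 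Let $E$ be the vertical annulus over an essential non-separating curve $c$ in the $S_1$-summand, chosen disjoint from the $1$-handle spots, so $\beta_E=c$. Then \emph{every} meridian $z$ satisfies $i(z,c)=0$: put $z$ in minimal position with respect to the two belt curves; since $\pi_1(M)=\pi_1(S_1)*\pi_1(S_2)*\pi_1(S_3)$ and $z$ is null-homotopic, decompose $z$ into arcs in the three pieces; minimality forces the arcs in the $S_1$- and $S_3$-pieces to be homotopically essential, and the arcs in the $S_2$-piece to be essential or straight connectors running from one belt curve to the other; deleting the connectors leaves a cyclically alternating word in nontrivial letters of the free factors, which can represent $1$ only if it is empty. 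Hence $z$ misses both belt curves, so it lies in a single piece, and is therefore either disjoint from the $S_1$-piece or isotopic to the first belt curve; in all cases $i(z,c)=0$. Nevertheless $c\in{\cal M}'$: it is disjoint from two disjoint separating meridians, and Otal's approximation (the Claim inside the paper's proof of Lemma \ref{re}, taking boundaries of regular neighbourhoods of $D_1\cup k_i\cup D_2$ for arcs $k_i$ spiralling along $c$) produces meridians converging projectively to $c$ without ever crossing it. This ``limit of meridians'' mechanism, which handles exactly the annuli disjoint from a non-separating meridian or from two disjoint separating meridians, is what your proof lacks; the paper reserves the twisting argument for the residual case where at most one (separating) meridian misses $E$, and only there does largeness produce a crossing meridian. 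Since the statement you would need is false, not merely unproved, the gap cannot be closed along the route you indicate.

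A secondary inaccuracy: in the small case you justify the existence of a meridian disjoint from $E$ by asserting that an essential annulus is boundary-incompressible. That is false in general: the essential annuli in the solid-torus summand of $(D^2\times S^1)\natural(S\times I)$ winding $q\geq 2$ times around the core are boundary-compressible (a boundary compression yields the meridian disc), and their boundaries essentially cross the non-separating meridian. The conclusion you want (the belt meridian misses every essential annulus of a small compression body, up to isotopy) is plausible but needs an argument, e.g.\ via the characteristic submanifold; the paper instead allows both possibilities, noting that when $\partial E$ does cross a meridian the twisting argument again places a sublamination of $\partial E$ in ${\cal M}'$, so that in either case one obtains $\mu_n\in{\cal M}'$ with vanishing intersection with a sublamination of $\partial E_n$.
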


\begin{proof}
Let $\lambda\not\in{\cal D}( M)$ be a measured geodesic lamination. We will show, using the following lemma of \cite{chefjeune}, that $\lambda\not\in\hat{\cal O}$.

\begin{lemma}[\cite{chefjeune}]		\label{re}
Let $E$ be an essential annulus in a large compression body $M$; then there is a projective measured geodesic lamination $\mu\in{\cal M}'$ with support lying in $\partial E$.
\end{lemma}

\begin{proof}
Since \cite{chefjeune} is not published, we will write the details of the proof. The boundary of $\partial M$ has only one compressible component $\partial _e M$ called the exterior boundary. Let us choose a complete hyperbolic metric on $\partial_e M$.\\

\begin{claim}
Let $c\subset\partial_e M$ be a simple closed curve that is disjoint from one non separating meridian or from two separating meridians; then there is a projective measured geodesic lamination $\mu\in{\cal M}'$ whose support is $c$.
\end{claim}

\begin{proof}
Let us first consider that there is a non separating meridian $m$ disjoint from $c$. Let $D$ be an essential disc bounded by $c$. Since $c$ does not separate $\partial M$, there is a sequence of simple closed curves $(c_i)$ that approximates $c$ and intersect $m$ in one point, namely the sequence $(c_i)$ converges to $c$ in ${\cal PML}(\partial M)$. Consider a small neighbourhood ${\cal V}_i$ of $D\cup c_i$ in $M$. The closure of $\partial{\cal V}_i-\partial M$ is an essential disc $D_i$ and the sequence $(\partial D_i)$ converges to $c$ in ${\cal PML}(\partial M)$.\\
\indent
Let us now assume that there are two disjoint separating meridians $m_1$ and $m_2$ which do not intersect $c$. Let $D_1$ and $D_2$ be two essential discs bounded by $m_1$ and $m_2$ respectively. Let $N$ be the closure of the connected component of $M-(D_1\cup D_2)$ whose boundary contains $c$. If $N$ intersects $D_1$ and $D_2$, we can approximate $c$ by a sequence of arcs $k_i$ joining $m_1$ to $m_2$. Let ${\cal V}_i$ be a small neighbourhood of $D_1\cup k\cup D_2$. The closure of  $\partial{\cal V}_i-\partial M$ is an essential disc $\Delta_i$ and the sequence $(\partial \Delta_i)$ converges to $c$ in ${\cal PML}(\partial M)$.\\
\indent If $N$ intersects only one disc $D_1$ or $D_2$, by considering an arc in $\partial M-N$ joining $D_1$ and $D_2$, we can construct an essential disc $D_3$ such that one component of $M-(D_1\cup D_3)$ or of $M-(D_2\cup D_3)$ contains $c$ and intersects $D_1$ and $D_3$ or $D_2$ and $D_3$. Thus we are in the previous case and can conclude as above.
\end{proof}

\indent
To prove Lemma \ref{re}, it remains to consider the case where there is at most one meridian disjoint from $E$ and this meridian separates $M$.\\
\indent
Let us assume that the two components of $\partial E$ are not homotopic in $\partial M$. Since $M$ is a large compression body, $E$ intersects a meridian $c$. Let us choose an orientation for $E$ and let $\psi:M\rightarrow M$ be the Dehn twist along $E$. The curve $\psi^n(c)$ is a meridian. The restriction of $\psi_n$ to $\partial M$ is a Dehn twist along $\partial E$. It follows that the sequence $(\psi^n(c))$ tends to a projective measured geodesic lamination $\mu\in{\cal M}'$ with $|\mu|\subset\partial E$.\\
\indent
Consider now that there is an annulus $E'\partial M$ with $\partial E'=\partial E$. By cutting $M$ along an essential disc disjoint from $E$ (if there is one, we can assume that $E$ intersects any essential disc in $M$. Since $M$ is atoroidal and $E\cup E'$ bounds a solid torus $T\subset M$. Furthermore each component of $\partial E'$ represent an element in $\pi_1(M)$ which is divisible. It follows that when $M$ is described as the connected sum along the boundary of tori and $I$-bundle over closed surfaces, $T$ does not go through an $I$-bundle over a closed surface. Since $T$ intersects any essential disc, we get that $M$ is a solid torus. Recalling that we may have cut $M$ along an essential disc, we conclude that $M$ was originally the connected sum along the boundary of a solid torus and an $I$-bundle over a closed surface. This contradicts our asumption that $M$ is a large compression body.\\
\end{proof}

\begin{rem}
If $E$ is an essential annulus in a small compression body, either $\partial E$ intersects a meridian and from the above a measured geodesic sublamination of $\partial E$ lies in ${\cal M}'$, or $\partial E$ is disjoint from the meridian. 
\end{rem}\\

 Let $\lambda$ be a measured geodesic lamination such that $\lambda\not\in {\cal D}(M)$. Then there is a sequence of essential discs or annuli  $E_n\subset M$ such that $i(\lambda,\partial E_n)\longrightarrow 0$. We will show that $\lambda\not\in\hat{\cal O}$.\\
\indent
We will first assume that $M$ is a large compression body. By Lemma \ref{re}, there is a sequence of multi-curves $(e_n)$ such that $e_n\subset\partial E_n$ and  that $e_n\in{\cal M}'$. Let $\varepsilon>0$ and let $\varepsilon e_n$ be the weighted multi-curve obtained by endowing each leaf of $e_n$ with a Dirac mass with weight $\varepsilon$. Up to extracting a subsequence, there is a sequence $(\varepsilon_n)$ converging to $0$ such that the sequence $(\varepsilon_n e_n)$  converges to some measured geodesic lamination $\alpha$. Since $\varepsilon_n e_n\in{\cal M}'$ for any $n$, then $\alpha\in{\cal M}'$. Since $\varepsilon_n\longrightarrow 0$, we have $i(\lambda,\alpha)=0$ hence $\lambda\not\in\hat{\cal O}$.\\
\indent
Let us now assume that $M$ is a small compression body. By the proof of Lemma \ref{re}, for each $n$, either $E_n$ is disjoint from an essential meridian or a connected component of $\partial E_n$ is the support of an element of ${\cal M'}$. Especially, for any $n$, there is $\mu_n\in{\cal M}'$ with $i(\mu_n,e_n)=0$. Furthermore, we can choose the $\mu_n$ such that a subsequence of $(\mu_n)$ converges in ${\cal ML}(\partial M)$ to a measured geodesic lamination $\mu\in{\cal M}'$. We get then $i(\alpha,\mu)=0$ and $i(\alpha,\lambda)=0$ hence $\lambda\not\in\hat{\cal O}$. Thus we have shown that if $\lambda\not\in {\cal D}(M)$, then $\lambda\not\in\hat{\cal O}$.
\end{proof}

The opposite is not true but we have the following :

\begin{lemma}	\label{arra}
Let $\lambda\in{\cal D}(M)$ be an arational measured geodesic lamination; then $\lambda$  lies in $\hat{\cal O}$.
\end{lemma}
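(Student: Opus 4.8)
The plan is to prove the contrapositive: assuming $\lambda$ is arational but $\lambda\notin\hat{\cal O}$, I will exhibit a meridian $\partial D_n$ with $i(\lambda,\partial D_n)=0$ for some $n$, directly contradicting the hypothesis $\lambda\in{\cal D}(M)$. The first observation is that arationality forces $|\lambda|$ to be a single minimal lamination filling $\partial_e M$: if $|\lambda|$ had more than one minimal component, then since the minimal pieces are pairwise disjoint and $\lambda$ fills its embraced surface, the effective boundary of one of them would be a simple closed geodesic $c$ disjoint from every component of $|\lambda|$ (by the structure of $S(L)$ and $\partial'\bar S(L)$ recalled above). Such a $c$ would satisfy $i(c,\lambda)=0$, contradicting arationality. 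Hence $|\lambda|$ is minimal, and consequently any nonzero $\beta$ with $i(\lambda,\beta)=0$ has $|\beta|\subseteq|\lambda|$, and therefore $|\beta|=|\lambda|$.

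Next I extract a weighted limit of meridians whose intersection with $\lambda$ vanishes. If $M$ is a large compression body, then $\lambda\notin{\cal O}$ directly provides $\mu\in{\cal M}'$ with $i(\lambda,\mu)=0$. If $M$ is a small compression body, then $\lambda\notin{\cal O}$ provides a lamination $\nu$ with $i(\lambda,\nu)=0$ together with some $\mu\in{\cal M}'$ satisfying $i(\mu,\nu)=0$; since $i(\lambda,\nu)=0$ and $|\lambda|$ is minimal we get $|\nu|=|\lambda|$, and because the vanishing of an intersection number depends only on the supports, $i(\mu,\nu)=0$ then yields $i(\mu,\lambda)=0$ as well. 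In either case we obtain $\mu\in{\cal M}'$ with $i(\lambda,\mu)=0$, whence $|\mu|=|\lambda|$ by the previous paragraph. Recalling that ${\cal M}'$ is the closure of the projective weighted meridians, I fix meridians with $\varepsilon_n\partial D_n\to\mu$ in ${\cal ML}(\partial_e M)$.

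The heart of the argument is then a train track realization. Choose a maximal train track $\tau$ carrying $\lambda$ so that the laminations carried by $\tau$ form a neighbourhood of $\lambda$ in ${\cal ML}(\partial_e M)$; since $|\mu|=|\lambda|$, the lamination $\mu$ is fully carried by $\tau$ and so lies in this neighbourhood. For $n$ large, $\varepsilon_n\partial D_n$ lies in the neighbourhood, hence $\partial D_n$ is itself carried by $\tau$. Now both $\lambda$ and $\partial D_n$ are carried by the single track $\tau$, and two laminations carried by a common train track can be isotoped to be disjoint; therefore $i(\lambda,\partial D_n)=0$. This contradicts double incompressibility, which guarantees $i(\lambda,\partial D_n)\geq\eta>0$, and the contradiction shows $\lambda\in\hat{\cal O}$.

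I expect the main obstacle to be the two ingredients that make the collapse onto $|\lambda|$ work: first, the rigorous verification that arationality forces $|\lambda|$ to be minimal, which requires controlling the complementary regions and the effective boundary so that a non-minimal support necessarily leaks a curve disjoint from all of $|\lambda|$; and second, the train track fact that a maximal track carrying a filling lamination carries an entire neighbourhood of it in ${\cal ML}(\partial_e M)$, which is precisely what licenses passing from $\varepsilon_n\partial D_n\to\mu$ to ``$\partial D_n$ is carried by $\tau$''. Once these are secured, the identity $i(\lambda,\partial D_n)=0$ for laminations sharing a train track is immediate and delivers the contradiction.
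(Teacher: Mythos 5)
Your preliminary reductions are sound: arationality does force $|\lambda|$ to be a single minimal filling lamination, and your reduction of the small compression body case to producing $\mu\in{\cal M}'$ with $i(\lambda,\mu)=0$ is legitimate (the paper handles that case even more directly, since then $|\mu|$ is the unique meridian and $i(|\mu|,\lambda)=0$ already contradicts arationality). The fatal gap is the final train-track step: it is false that two measured geodesic laminations carried by a common train track can be isotoped to be disjoint. Carried laminations follow the same branches but in general cross each other transversely. In fact the very fact you invoke refutes the claim: a complete train track $\tau$ fully carrying $\lambda$ carries every lamination in a whole neighbourhood of $\lambda$ in ${\cal ML}(\partial_e M)$, and since $\lambda$ is arational (filling), every lamination in that neighbourhood whose support differs from $|\lambda|$ has strictly positive intersection number with $\lambda$. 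So carrying by $\tau$ gives no upper bound at all on $i(\lambda,\cdot)$. All that the convergence $\varepsilon_n\partial D_n\rightarrow\mu$ yields is $\varepsilon_n\, i(\lambda,\partial D_n)\rightarrow i(\lambda,\mu)=0$, which is vacuous because $\varepsilon_n\rightarrow 0$; the entire difficulty of the lemma is precisely that the projective condition defining ${\cal O}$ does not by itself control the unrescaled numbers $i(\lambda,\partial D_n)$.

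A clear symptom that the argument proves too much: the contradiction you reach, $i(\lambda,\partial D_n)=0$, already contradicts arationality alone, so your proof never genuinely uses $\lambda\in{\cal D}(M)$ and would show that \emph{every} arational lamination lies in $\hat{\cal O}$. That is false: ${\cal M}'$ itself contains laminations with arational support (for instance the attracting lamination of a pseudo-Anosov homeomorphism of $\partial_e M$ extending over $M$ is a limit of meridians), and such a $\lambda_0$ satisfies $i(\lambda_0,\lambda_0)=0$, hence $\lambda_0\notin\hat{\cal O}$. The hypothesis $\lambda\in{\cal D}(M)$ must therefore enter through something stronger than carrying, and in the paper it enters through Lemma \ref{supcond}: one extracts a Hausdorff limit $L$ of the meridians $c_n$ (where $\varepsilon_n c_n\rightarrow\mu$), applies Casson's criterion (\cite{chefjeune}, \cite{espoir}) to find a homoclinic leaf $l\subset L$, notes that $l$ cannot cross $|\mu|=|\lambda|\subset L$ because $L$ is a lamination, and this contradicts the fact that a lamination in ${\cal D}(M)$ intersects every homoclinic leaf transversely --- the statement of Lemma \ref{supcond}, which rests on the ${\cal P}(M)$ machinery of \cite{espoir}. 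That is the step your train-track claim was trying to substitute for, and it cannot be recovered from carrying considerations.
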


\begin{proof}
Let us assume the contrary; if $M$ is a large compression body, there is  $\mu\in{\cal M}'$ such that $i(\mu,\lambda)= 0$. It follows from the assumption that $\lambda$ is arational that $\lambda$ and $\mu$ share the same support $|\mu|$. Since $\mu\in{\cal M}'$, there is a sequence of meridians $c_n\subset \partial M$ and a sequence $\varepsilon_n\longrightarrow 0$ such that $\varepsilon_n c_n$ converges to $\mu$ in the topology of ${\cal ML}(\partial M)$. Up to extracting a subsequence, $(c_n)$ converges in the Hausdorff topology to a geodesic lamination $L$ and we have $|\mu|\subset L$. By Casson's criterion (cf. \cite{chefjeune}, \cite[Theorem B.1]{espoir} or \cite{fini}), $L$ contains a homoclinic leaf $l$. Since $|\mu|\subset L$ is the support of $\lambda$, $l$ does not intersect $\lambda$ transversely. This contradicts Lemma \ref{supcond} below.\\
\indent
If $M$ is a small compression body, $\partial M$ contains a unique meridian $c$. Let us assume that $\lambda\not\in\hat{\cal O}$; then there is $\mu\in{\cal ML}(\partial_e M)$ such that $i(c,\mu)=0$ and $i(\lambda,\mu)=0$. Since $\lambda$ is arational and $i(\lambda,\mu)=0$, $\mu$ is also arational. This contradicts the fact that\linebreak $i(c,\mu)=0$.
\end{proof}

\indent
In \cite{espoir} (see also \cite{fini}), one studied the subset ${\cal P}(M)$ of ${\cal ML}(\partial M)$ defined as follows. Let $\lambda\in{\cal ML}(\partial M)$ be a measured geodesic lamination; then $\lambda\in{\cal P}(M)$ if and only if :
\begin{description}
\item -  $a)$ no closed leaf of $\lambda$ has a weight greater than $\pi$;
\item -  $b)$ $\exists\eta>0$ such that, for any essential annulus $E$, $i(\partial E,\lambda)\geq\eta$;
\item -  $c)$ $i(\lambda,\partial D)>2\pi$ for any essential disc $D$.
\end{description}
Let $\rho:\pi_1(M)\rightarrow Isom(\Hp^3)$ be a geometrically finite representation uniformizing $M$ and let $h$ be an isotopy class of homeomorphisms $M\rightarrow N(\rho)^{ep}$ homotopic to the identity; we will denote by ${\cal GF}(M)$ the set of such pairs $(\rho,h)$. There is a well defined map\linebreak $b:{\cal GF}(M)\rightarrow {\cal ML}(\partial M)$ which to a pair $(\rho,h)$ associates the preimage under $h$ of the bending measured geodesic lamination of $N(\rho)$, let us call this map the bending map. It is shown in \cite{meister1} and \cite{espoir} that ${\cal P}(M)$ is the image of $b$.\\

\indent In \cite{espoir}, it was proved that a measured geodesic lamination lying in ${\cal P}(M)$ intersects transversely all the homoclinic leaves and all the annular laminations. In order to get the same property for the laminations lying in ${\cal D}(M)$, we will discuss the relationships between ${\cal P}(M)$ and ${\cal D}(M)$.\\

\indent We clearly have ${\cal P}(\partial M)\subset {\cal D(M)}$, conversely, we have : 

\begin{lemma}	\label{dedan}
Let $\lambda\in{\cal D(M)}$ be a measured geodesic lamination not satisfying the condition $(-)$, then there is a measured geodesic lamination $\alpha\in{\cal P}(M)$ with the same support as $\lambda$. 
\end{lemma}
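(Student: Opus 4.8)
The plan is to keep the support fixed, raise the weight of every closed leaf to the maximal value $\pi$ allowed by condition $(a)$, and scale the rest of $\lambda$ by a large factor, using that conditions $(b)$ and $(c)$ ask only for lower bounds on intersection numbers, which grow under scaling. Concretely, write $|\lambda|$ as the disjoint union of its closed leaves $c_1,\dots,c_p$ (carrying weights $w_1,\dots,w_p$) together with its non closed minimal components, whose union equipped with the transverse measure of $\lambda$ I denote by $\lambda_a$. For a parameter $t\geq 1$ set
$$\alpha=\pi\sum_{i=1}^p c_i+t\,\lambda_a,$$
where $c_i$ now denotes the leaf with unit weight. Then $|\alpha|=|\lambda|$ and condition $(a)$ holds by construction, so everything reduces to choosing $t$ so that $(b)$ and $(c)$ hold.

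Condition $(b)$ is the easy part. Let $E$ be an essential annulus. If $\partial E$ meets one of the closed leaves, then $i(\alpha,\partial E)\geq\pi$, since the geometric intersection number of two closed geodesics is a positive integer. If $\partial E$ is disjoint from every $c_i$, then by additivity of the intersection number over the components of $|\lambda|$ one has $i(\lambda,\partial E)=i(\lambda_a,\partial E)$, whence $i(\alpha,\partial E)=t\,i(\lambda_a,\partial E)=t\,i(\lambda,\partial E)\geq t\eta$, where $\eta$ is the constant given by $\lambda\in{\cal D}(M)$. Thus $(b)$ holds with the uniform constant $\min(\pi,\eta)$ for every $t\geq 1$.

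For condition $(c)$ I would fix an essential disc $D$ and set $k=i(\partial D,\bigcup_i c_i)\in\N$ and $r=i(\partial D,\lambda_a)$, so that $i(\alpha,\partial D)=\pi k+t\,r$. If $k\geq 3$ then $i(\alpha,\partial D)\geq 3\pi>2\pi$; if $k=0$ then $r=i(\lambda,\partial D)\geq\eta$ and $i(\alpha,\partial D)=t\,r\geq t\eta$, which exceeds $2\pi$ as soon as $t>2\pi/\eta$; if $k=2$ and $r>0$ then $i(\alpha,\partial D)=2\pi+tr>2\pi$ for every $t>0$; and if $k=1$ and $r>0$ then $i(\alpha,\partial D)>2\pi$ holds for $t>\pi/r$. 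Hence the only obstructions to $(c)$ are essential discs whose boundary is disjoint from $\lambda_a$ and meets $\bigcup_i c_i$ in at most two points (the cases $r=0$, $k\in\{1,2\}$), together with a possible failure of a \emph{uniform} $t$ coming from discs with $k=1$ and $r$ arbitrarily small. The whole content of the lemma is that, unless $(-)$ holds, none of these occur.

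The analysis of these exceptional discs is the step I expect to be hardest. A boundary curve with $r=i(\partial D,\lambda_a)=0$ is disjoint from $|\lambda_a|$ and can be isotoped into the complementary subsurface of $\partial M$, which contains the $c_i$, so the problem becomes combinatorial on $\partial M$. The case $k=0$ is excluded immediately, since it forces $i(\lambda,\partial D)=0$ against $\lambda\in{\cal D}(M)$. In the case $k=1$ the curve $\partial D$ crosses a single closed leaf $c_i$ once; I would band two parallel copies of $D$ across the annular neighbourhood of $c_i$ to produce, after an innermost-disc simplification, an essential disc whose boundary is disjoint from $c_i$ and still disjoint from the rest of $|\lambda|$, hence disjoint from all of $|\lambda|$, contradicting $\lambda\in{\cal D}(M)$; verifying that the resulting disc is genuinely essential is the delicate point here. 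In the case $k=2$ the same surgeries either lower $k$, falling back into the impossible cases, or are obstructed, and it is precisely in this irreducible situation that one should read off a homeomorphism of $M$ onto an $I$-bundle over a pair of pants $P$ carrying $\bigcup_i c_i$ onto a section over $\partial P$, that is, exactly situation $(-)$. Finally, to exclude the degenerate sequences of meridians with $k=1$ and $r\to 0$ (which would spoil uniformity of $t$), I would reproduce the mechanism of the proof of Lemma \ref{arra}: such a sequence converges in the Hausdorff topology to a lamination containing, by Casson's criterion, a homoclinic leaf that fails to meet $\lambda$ transversely, contradicting Lemma \ref{supcond}. Once these configurations are ruled out, the infimum of $r$ over discs with $k=1$ is positive, a single large $t$ makes $(c)$ hold for every essential disc, and $\alpha\in{\cal P}(M)$ has the same support as $\lambda$, as required.
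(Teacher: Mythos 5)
Your normalization and case analysis follow the same broad plan as the paper's proof (the paper scales all of $\lambda$ by $\frac{2\pi}{\eta}$ and then caps the heavy closed leaves at $\pi$, rather than setting every closed leaf to $\pi$ and scaling only $\lambda_a$, but this difference is immaterial), and the surgery you propose for the exceptional discs, as well as reading off condition $(-)$ in the case $k=2$, $r=0$, are exactly the paper's ideas. However, there is a genuine gap at the step you yourself identify as the crux: excluding sequences of essential discs $D_n$ with $k=1$ and $r_n=i(\partial D_n,\lambda_a)\to 0$, which is what you need to find one $t$ working for all discs. Your proposed mechanism --- take the Hausdorff limit of $(\partial D_n)$, invoke Casson's criterion, and contradict Lemma \ref{supcond} --- fails, because there is no reason why the homoclinic leaf in the limit should not meet $\lambda$ transversely. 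Each $\partial D_n$ crosses a closed leaf $c_i$ of $\lambda$ exactly once, so the Hausdorff limit $L$ contains leaves crossing $c_i$ transversely, and the homoclinic leaf may be one of them. Lemma \ref{supcond} only forbids homoclinic leaves that \emph{fail} to meet $\lambda$ transversely, so no contradiction results. The argument works in Lemma \ref{arra} only because there $\lambda$ is arational and $|\lambda|=|\mu|\subset L$, so that no leaf of $L$ can cross $\lambda$; in your situation $|\lambda|$ is not contained in $L$.

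The fix is to apply the surgery you already use in the case $r=0$ to \emph{every} disc with $k=1$, not as a limiting argument but to get a uniform bound. Given an essential disc $D$ crossing the closed leaf $c_i$ once, let $D'$ be the frontier disc of a regular neighbourhood of $c_i\cup D$. Since $|\lambda|$ is a disjoint union of minimal components, an annular neighbourhood of $c_i$ misses $\lambda_a$, so $i(\lambda,\partial D')\leq 2r$; and $D'$ is essential, since otherwise irreducibility would force $M$ to be a solid torus, which is excluded by $\chi(\partial M)<0$ (this also settles the point of essentiality you flagged as delicate). Then $\lambda\in{\cal D}(M)$ applied to $D'$ gives $2r\geq\eta$, a uniform lower bound, so any $t>2\pi/\eta$ handles all $k=1$ discs at once. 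This is essentially how the paper argues: having scaled first, every essential disc satisfies $i\bigl(\tfrac{2\pi}{\eta}\lambda,\partial D\bigr)>2\pi$, the surgered disc $D'$ is disjoint from the capped leaves so capping does not affect it, and the inequality $i(\partial D',\alpha)\leq 2\bigl(i(\partial D,\alpha)-\pi\bigr)$ transfers the bound $i(\partial D',\alpha)>2\pi$ back to $D$; no compactness or limiting argument is needed anywhere.
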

\begin{proof}
\noindent Since $\lambda\in{\cal D(M)}$, $\exists \eta>0$ such that $i(\partial E,\lambda)>\eta$ for any essential annulus or disc $E$. Let $\frac{2\pi}{\eta}\lambda$ be the measured geodesic lamination obtained by multiplying the measure $\lambda$ by $\frac{2\pi}{\eta}$; then $\frac{2\pi}{\eta}\lambda$ satisfies the properties $b)$ and $c)$ above. Let $\lambda^{(p)}$ be the union of the leaves of $\frac{2\pi}{\eta}\lambda$ with a weight greater than $\pi$ and let $\alpha$ be the measured geodesic lamination obtained from $\frac{2\pi}{\eta}\lambda$ by decreasing the weight of the leaves of $\lambda^{(p)}$ to $\pi$. This measured geodesic lamination $\alpha$ satisfies $a)$ and $b)$, let us show that it satisfies also $c)$. \\
\indent  Let $D\subset M$ be an essential disc; then $i(\frac{2\pi}{\eta}\lambda,\partial D)>2\pi$. If $\partial D$ does not intersect $\lambda^{(p)}$ transversely, then $i(\alpha,\partial D)=i(\frac{2\pi}{\eta}\lambda,\partial D)>2\pi$.\\
\indent If  $\partial D$  intersects $\lambda^{(p)}$ in one point $x$, let $c $ be the leaf of $\lambda^{(p)}$ containing $x$. Let ${\cal V}$ be a small neighbourhood of $c \cup D$; ${\cal V}$ is a solid torus. Let $D'$ be the closure of $\partial{\cal V}-\partial M$, $D'$ is a disc properly embedded in $M$ which does not intersect $\lambda^{(p)}$. Hence we have $i(\partial D',\alpha)= i(\partial D',\frac{2\pi}{\eta}\lambda)$. If $D'$ is not an essential disc, then $\partial D'$ bounds a disc\linebreak $D''\subset\partial M$. Since $M$ is irreducible, $D'\cup D''$ bounds a ball $B\subset M$ and $M=B\cup{\cal V}$ is a solid torus. By assumption, $M$ is not a solid torus hence $D'$ is an essential disc and $i(\partial D',\frac{2\pi}{\eta}\lambda)>2\pi$. Since $i(\partial D',\alpha)\leq 2(i(\partial D,\alpha)-\pi)$, we have\linebreak $i(\partial D,\alpha)\geq \frac{i(\partial D',\alpha)}{2}+\pi=\frac{i(\partial D',\frac{2\pi}{\eta}\lambda)}{2}+\pi>2\pi$.\\
\indent If $\partial D$ intersects $\lambda^{(p)}$ in two points $x$ and $y$, we have\linebreak $i(\alpha,\partial D)=2\pi+i(\frac{2\pi}{\eta}\lambda-\lambda^{(p)},\partial D)$. Hence we just have to show that\linebreak  $i(\lambda-\lambda^{(p)},\partial D)>0$. Assuming the contrary, we have $\lambda\cap\partial D=\{x,y\}$. If $x$ and $y$ lie in two distincts leaves $c\subset|\lambda|$ and $d\subset|\lambda| $, let ${\cal V}$ be a small neighbourhood of $c \cup d\cup D$; ${\cal V}$ is an $I$-bundle over a pair of pants. The closure of $\partial{\cal V}-\partial M$ is an annulus with boundary not intersecting $|\lambda|$. By condition $b)$, this annulus is not essential. It follows that $M$  is an $I$-bundle over a pair of pants $P$ and that $|\lambda|$ lies in a section of the bundle over $\partial P$. This contradicts our assumptions hence $x$ and $y$ lie in the same leaf $c $ of $\lambda^{(p)}$.\\
\indent Let ${\cal V}$ be a small neighbourhood of $c \cup D$; it is again an $I$-bundle over a pair of pants. If the tangents vectors $\frac{dc}{dt}_{|x}$ and $\frac{dc}{dt}_{|y}$ do not point to the same side of $\partial D$, the closure of $\partial{\cal V}-\partial M$ is the union of two annuli with boundaries not intersecting $\lambda$. This yields the same contradiction as above.\\
\indent Next let us consider the case where $\frac{dc}{dt}_{|x}$ and  $\frac{dc}{dt}_{|y}$  point to the same side of $\partial D$. Let $k$ be a connected component of $c -\{x,y\}$ and let ${\cal V}'$ be a small neighbourhood of $k\cup D$; the closure of $\partial{\cal V}'-\partial M$ is an essential disc $D'$. Replacing $D$ by $D'$, we are in the situation of the previous paragraph and get the same contradiction.\\
\indent If $\partial D$ and $\lambda^{(p)}$ intersect each other in more than $2$ points, $i(\lambda',\partial D)\geq 3\pi$.
\end{proof}

Combining Lemma \ref{dedan} and results of \cite{espoir} (see also \cite{fini}) we get the following :

\begin{lemma}	\label{supcond}
A measured geodesic lamination $\lambda\in{\cal D}(M)$ not satisfying the condition $(-)$  has the following property :
\begin{description}
\item - $\lambda$ intersects transversely any annular lamination and any geodesic lamination containing a homoclinic leaf.
\end{description}
\end{lemma}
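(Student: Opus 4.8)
The plan is to deduce this from the corresponding statement for laminations in ${\cal P}(M)$, which is already available, by passing through Lemma \ref{dedan}. The point is that the transversality asserted here is purely a property of the support $|\lambda|$, so it suffices to exhibit any measured lamination with the same support lying in ${\cal P}(M)$ and to quote the result of \cite{espoir} for that lamination.

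Concretely, I would proceed as follows. Since $\lambda\in{\cal D}(M)$ does not satisfy condition $(-)$, Lemma \ref{dedan} provides a measured geodesic lamination $\alpha\in{\cal P}(M)$ with $|\alpha|=|\lambda|$. Next I would invoke the result recalled above from \cite{espoir} (see also \cite{fini}): every measured geodesic lamination lying in ${\cal P}(M)$ intersects transversely all homoclinic leaves and all annular laminations. Applying this to $\alpha$ shows that $\alpha$ intersects transversely any annular lamination and any geodesic lamination containing a homoclinic leaf, which is exactly the property claimed for $\lambda$.

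It then remains to transfer this property from $\alpha$ to $\lambda$. Here I would observe that the relation ``intersects transversely'' between a measured geodesic lamination and a geodesic lamination depends only on the underlying supports: it asserts the existence of a point where a leaf of one lamination crosses a leaf of the other, a condition that is insensitive to the transverse measure carried by $\lambda$. Since $|\lambda|=|\alpha|$, any transverse crossing of $|\alpha|$ with an annular lamination or with a homoclinic leaf is simultaneously a transverse crossing of $|\lambda|$, and the conclusion passes verbatim from $\alpha$ to $\lambda$.

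This is essentially a combination of two previously established facts, so I do not expect a genuine obstacle. The only step deserving a moment of care is the last one, namely checking that replacing $\lambda$ by the measure-equivalent lamination $\alpha$ changes nothing for the purpose of transversality; this is immediate from the definition, since transverse intersection of geodesic laminations is governed by their supports alone.
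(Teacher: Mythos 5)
Your proposal is correct and follows exactly the paper's argument: the paper proves this lemma precisely by combining Lemma \ref{dedan} with the result quoted from \cite{espoir} that laminations in ${\cal P}(M)$ intersect transversely all homoclinic leaves and all annular laminations, the transfer back to $\lambda$ being immediate since transversality depends only on supports. Your spelling out of that last support-only observation is a useful bit of care that the paper leaves implicit.
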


\begin{rem}
Let us add a few comments about the case where $\lambda$ satisfies the condition $(-)$. Any homoclinic leaf $l$ intersects $\lambda$ at least once. If an annular geodesic lamination $A$ does not intersect $\lambda$ transversely, then $A$ contains two disjoint half-leaves both spiraling in the same direction toward the same leaf of $\lambda$. This can not happen for a Hausdorff limit of multi-curves. Therefore $\lambda$ has the property above if we consider only annular laminations that are Hausdorff limits of multi-curves.
\end{rem}

\section{Topological properties of ${\cal D}(M)$}

\begin{lemma}
The set ${\cal D}(M)$ is an open set.
\end{lemma}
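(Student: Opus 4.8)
The plan is to reduce the openness of ${\cal D}(M)$ to a compactness statement in ${\cal PML}(\partial M)$. Fix once and for all a filling multi-curve $\lambda_0\subset\partial M$, so that $i(\lambda_0,\gamma)\geq 1$ for every essential simple closed curve $\gamma$ and so that the sphere $\Sigma_0=\{\mu\in{\cal ML}(\partial M)\mid i(\lambda_0,\mu)=1\}$ is compact. Let ${\cal R}\subset\Sigma_0$ be the closure of the set of normalised boundaries $\partial E/i(\lambda_0,\partial E)$, where $E$ ranges over all essential discs and annuli in $M$; as $\Sigma_0$ is compact, ${\cal R}$ is compact. For $\lambda\in{\cal ML}(\partial M)$ set $m(\lambda)=\min_{\mu\in{\cal R}}i(\lambda,\mu)$, which exists by compactness and depends continuously on $\lambda$ since ${\cal R}$ is a fixed compact set and $i$ is continuous. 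I will show that ${\cal D}(M)=\{\lambda\mid m(\lambda)>0\}$; as $m$ is continuous, this last set is open, which proves the lemma.

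The inclusion $\{m>0\}\subset{\cal D}(M)$ is immediate: if $m(\lambda)>0$, then for every essential disc or annulus $E$ one has $i(\lambda,\partial E)=i(\lambda_0,\partial E)\cdot i(\lambda,\partial E/i(\lambda_0,\partial E))\geq 1\cdot m(\lambda)$, because $\partial E/i(\lambda_0,\partial E)\in{\cal R}$ and $i(\lambda_0,\partial E)\geq 1$; thus $\lambda\in{\cal D}(M)$ with $\eta=m(\lambda)$. Conversely I must check that $\lambda\in{\cal D}(M)$ forces $m(\lambda)>0$, i.e. that $i(\lambda,\mu)>0$ for every $\mu\in{\cal R}$. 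Write $\mu=\lim_n \partial E_n/t_n$ with $t_n=i(\lambda_0,\partial E_n)\geq 1$. When the sequence $(t_n)$ is bounded this is easy: the uniform bound $i(\lambda,\partial E_n)>\eta$ gives $i(\lambda,\mu)=\lim_n i(\lambda,\partial E_n)/t_n\geq \eta/\limsup t_n>0$.

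The main obstacle is the case $t_n\to\infty$, where this estimate degenerates and an arbitrarily small normalised intersection is compatible with the uniform lower bound on $i(\lambda,\partial E_n)$. Here I argue by contradiction using the structural results already established. Suppose $\lambda\in{\cal D}(M)$, that $\lambda$ does not satisfy the condition $(-)$, and that $i(\lambda,\mu)=0$ for such a $\mu$. Recall that $i(\lambda,\mu)=0$ exactly when $|\lambda|$ and $|\mu|$ do not cross transversely, so that $|\lambda|\cup|\mu|$ is a geodesic lamination. After passing to a subsequence I may assume that all $E_n$ are discs or that all are annuli, and I let $L$ be the Hausdorff limit of $(\partial E_n)$, so that $|\mu|\subset L$. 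If the $E_n$ are discs, Casson's criterion (\cite{chefjeune}, \cite[Theorem B.1]{espoir}, \cite{fini}) provides a homoclinic leaf in $L$; if the $E_n$ are annuli, $L$ contains a biasymptotic pair, hence an annular sublamination. In either case, because the measured limit $\mu$ is non-trivial and charges the recurrent part of $L$, the relevant homoclinic or annular sublamination meets $|\mu|$, and Lemma \ref{supcond} then forces $\lambda$ to cross it transversely. This contradicts the fact that $|\lambda|$ does not cross $|\mu|$, and proves $i(\lambda,\mu)>0$.

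The laminations satisfying $(-)$ must be treated apart: there $M$ is an $I$-bundle over a pair of pants $P$ and $\lambda$ is carried by a section over $\partial P$, a situation in which the essential discs and annuli are completely explicit, so that ${\cal D}(M)$ and its openness can be verified by hand. I expect the genuine difficulty to be concentrated in the step, in the $t_n\to\infty$ case, that passes from ``$\lambda$ crosses the Hausdorff limit $L$'' to ``$\lambda$ crosses the support $|\mu|$ of the measured limit''; this is precisely the point at which Lemma \ref{supcond} and the analysis of limits of meridians from \cite{espoir} are indispensable, everything else being a formal consequence of the compactness of ${\cal R}$ and the continuity of $i$.
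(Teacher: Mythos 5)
Your reduction to the compact set ${\cal R}$ fails at the claimed equality ${\cal D}(M)=\{m>0\}$: the inclusion $\{m>0\}\subset{\cal D}(M)$ is fine, but the reverse inclusion is false, so openness of $\{m>0\}$ says nothing about ${\cal D}(M)$. The paper's own example in its concluding section is a counterexample. Let $M$ be an $I$-bundle over a compact surface $S$ with boundary (a handlebody), and let $\lambda$ consist of a binding pair $(\gamma,\alpha)$ placed on the two horizontal boundary components $\{0\}\times S$ and $\{1\}\times S$, together with $\{p\}\times\partial S$ weighted by $\eta$. The paper proves $\lambda\in{\cal D}(M)$; yet, by \cite{boches}, there are weighted meridians $w_nm_n$ converging in ${\cal ML}(\partial M)$ to a lamination $\mu$ with $|\mu|=|\gamma|$, so that $i(\lambda,\mu)=0$. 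Renormalizing, $m_n/i(\lambda_0,m_n)=w_nm_n/i(\lambda_0,w_nm_n)\rightarrow\mu/i(\lambda_0,\mu)\in{\cal R}$, hence $m(\lambda)=0$ although $\lambda\in{\cal D}(M)$. This is exactly the reason that $\lambda\notin\hat{\cal O}$: your set $\{m>0\}$ is a Masur-domain-like set defined by positivity against \emph{limits} of normalized disc and annulus boundaries, and the whole point of ${\cal D}(M)$ is that it is strictly larger than such a set (Lemma \ref{arra} gives the converse inclusion only for arational laminations).

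Correspondingly, the step you yourself flag as the ``genuine difficulty'' in the case $t_n\rightarrow\infty$ --- passing from ``$\lambda$ crosses the Hausdorff limit $L$ of $(\partial E_n)$'' to ``$\lambda$ crosses $|\mu|$'' --- is not merely difficult but false. Lemma \ref{supcond} does force $\lambda$ to intersect transversely the homoclinic leaf, or the annular sublamination, contained in $L$; but that leaf need not lie in, nor even meet, $|\mu|$: the support of the weak$^*$ limit is in general a proper sublamination of the Hausdorff limit, and in the example above the crossings of $\lambda$ with $L$ occur precisely on leaves of $L-|\mu|$. Your assertion that $\mu$ ``charges the recurrent part of $L$'' so that the homoclinic or annular piece ``meets'' $|\mu|$ is both unjustified and, even if true, insufficient, since a crossing of $\lambda$ with such a leaf at a point off $|\mu|$ contradicts nothing. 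This is also why the paper's proof is arranged differently: there one has $\lambda_n\rightarrow\lambda$ with the \emph{un-normalized} numbers $i(\lambda_n,\partial E_n)\rightarrow 0$, and this hypothesis controls transverse intersections of $\lambda$ with the entire Hausdorff limit $A$ of $(\partial E_n)$ --- any transverse crossing of $\lambda$ with $A$ would keep $i(\lambda_n,\partial E_n)$ bounded away from $0$ --- so Lemma \ref{supcond} applies to $A$ itself and gives the contradiction, with no normalization and no passage from Hausdorff limits to measured limits. Your case of bounded $(t_n)$ is correct, but it is the unbounded case that carries all the content, and there the strategy cannot be repaired.
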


\begin{proof}
Let us assume the contrary. Then there are $\lambda\in{\cal D}(M)$ and a sequence of measured geodesic laminations $\lambda_n\not\in{\cal D}(M)$  converging to $\lambda$. Therefore there is a sequence of essential discs or annuli $E_n$ such that $i(\lambda_n,\partial E_n)\longrightarrow 0$. Let us extract a subsequence such that $\partial E_n$ converge in the Hausdorff topology to a geodesic lamination $A$. Then $A$ does not intersect $\lambda$ transversely. By \cite{espoir} (see also \cite{fini}) either $A$ contains a homoclinic leaf (\cite[Theorem B1]{espoir}) or $A$ is annular (\cite[Lemma C2]{espoir}), both contradicting Lemma \ref{supcond}.
\end{proof}

\indent
A train track $\tau$ carrying a measured geodesic lamination is {\it complete} if it is not a subtrack of a train track carrying a measured geodesic lamination (cf. \cite{train}).\\
\indent
Any measured geodesic lamination $\lambda$ is carried by some (maybe many) complete train track $\tau$. The weight system on a complete train track gives rise to a coordinate system for a simplex of the piecewise linear manifold ${\cal ML}(\partial M)$. The {\it rational depth} of a measured geodesic lamination $\lambda$ is the dimension of the rational vector space of linear functions with rational coefficients (from the simplex previously defined to $\R$) vanishing on the coordinates of $\lambda$. Let us denote by ${\cal I}(\partial M)$ the set of measured geodesic laminations with rational depth equal to $0$ or $1$. If a measured geodesic lamination $\lambda$ lies in ${\cal I}$, either $\lambda$ is arational or there is a closed leaf $c$ of $\lambda$ such that $\lambda$ is arational in $\partial M-c$ (cf. \cite[Proposition 9.5.12]{notes}). By Lemma \ref{dedan} and \cite[Lemma 2.5]{proper}, the proof of Lemma \ref{arra} holds also in the second case, namely if $\lambda\in{\cal D}(M)$ and if there is  closed leaf $c$ of $\lambda$ such that $\lambda$ is arational in $\partial M-c$, then $\lambda\in\hat{\cal O}$. The set ${\cal I}$ is a dense open subset of ${\cal ML}(\partial M)$ (cf. \cite[chap 9]{notes}).

\begin{proposition}	\label{arc}
The sets ${\cal D}(M)$ and $\hat{\cal O}$ are pathwise connected.
\end{proposition}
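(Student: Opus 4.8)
The plan is to pass through the common generic part of the two sets. First I would record that ${\cal D}(M)$ and $\hat{\cal O}$ are both \emph{open} (for $\hat{\cal O}$ this follows from the compactness of ${\cal M}'$ and the continuity of $i$, which make the defining inequalities open conditions), and that on the dense open set ${\cal I}$ they coincide. Indeed $\hat{\cal O}\cap{\cal I}\subseteq{\cal D}(M)\cap{\cal I}$ by Lemma~\ref{cont}, while a lamination in ${\cal D}(M)\cap{\cal I}$ has rational depth $0$ or $1$ and is therefore arational, or arational in the complement of one closed leaf, hence lies in $\hat{\cal O}$ by Lemma~\ref{arra} together with the extension noted above (via \cite[Proposition 9.5.12]{notes} and \cite[Lemma 2.5]{proper}); note in particular that such a lamination never satisfies $(-)$, so no exceptional case interferes. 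Thus ${\cal D}(M)\cap{\cal I}=\hat{\cal O}\cap{\cal I}$, and this set is open and dense in each of ${\cal D}(M)$ and $\hat{\cal O}$. Since an open subset of a manifold that contains a dense path-connected subset is itself path-connected, both assertions will follow once I show that ${\cal D}(M)\cap{\cal I}$ is path-connected.

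To this end I first establish that ${\cal D}(M)$ itself is path-connected, using the bending map as the engine. The set ${\cal P}(M)=b({\cal GF}(M))$ is the continuous image, under the bending map, of the space ${\cal GF}(M)$ of marked geometrically finite uniformizations; since ${\cal GF}(M)$ is (path-)connected and $b$ is continuous by \cite{kes} and \cite{bodiff}, the set ${\cal P}(M)$ is path-connected, and ${\cal P}(M)\subseteq{\cal D}(M)$. Given any $\lambda\in{\cal D}(M)$ not satisfying $(-)$, Lemma~\ref{dedan} furnishes $\alpha\in{\cal P}(M)$ with $|\alpha|=|\lambda|$. The segment $\lambda_t=(1-t)\lambda+t\alpha$ has constant support, so $i(\lambda_t,\partial E)=(1-t)\,i(\lambda,\partial E)+t\,i(\alpha,\partial E)$ stays bounded below by a positive constant uniformly over all essential annuli and discs $E$; hence $\lambda_t\in{\cal D}(M)$ for all $t$, and $\lambda$ is joined inside ${\cal D}(M)$ to the point $\alpha$ of the path-connected set ${\cal P}(M)$. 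A lamination satisfying $(-)$ is supported on a fixed multicurve (a section over $\partial P$); since ${\cal D}(M)$ is open I can join it, by a short segment lying in a ball inside ${\cal D}(M)$, to a nearby lamination of strictly larger support, which therefore does not satisfy $(-)$ and falls under the previous case. Thus ${\cal D}(M)$ is path-connected.

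It remains to push these paths into ${\cal I}$. The complement ${\cal ML}(\partial M)\setminus{\cal I}$ consists of the laminations of rational depth $\geq 2$; in the piecewise-linear coordinates on ${\cal ML}(\partial M)$ this is a countable union of rational affine subspaces of codimension at least $2$, hence a closed set that a generic arc avoids. Given two points of ${\cal D}(M)\cap{\cal I}$, I join them by a path in the open set ${\cal D}(M)$ as above and then perturb it, rel endpoints and keeping it inside ${\cal D}(M)$, into general position so that it misses ${\cal ML}(\partial M)\setminus{\cal I}$; the resulting path lies in ${\cal D}(M)\cap{\cal I}$. Therefore ${\cal D}(M)\cap{\cal I}=\hat{\cal O}\cap{\cal I}$ is path-connected, and by the opening remark both ${\cal D}(M)$ and $\hat{\cal O}$ are path-connected.

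The main obstacle is the input feeding the first reduction: the path-connectedness of ${\cal P}(M)$, which rests on the continuity of the bending measure from \cite{kes} and \cite{bodiff} and on the connectedness of the deformation space ${\cal GF}(M)$. The transversality step is routine in principle but must be carried out so that the perturbed path never leaves the open set ${\cal D}(M)$ while being pushed off the codimension-$\geq 2$ locus; this is precisely where the openness of ${\cal D}(M)$ and the density of ${\cal I}$ are used together.
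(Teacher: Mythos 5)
Your overall architecture is sound, and much of it coincides with the paper's own proof: the reduction to the dense open set ${\cal I}$, the generic perturbation of paths into ${\cal I}\cap{\cal D}(M)$ (note the complement of ${\cal I}$ is a \emph{countable union} of closed rational subspaces of codimension $\geq 2$, not a single closed set, so this needs a Baire-type argument --- but that is the same level of detail as the paper's appeal to \cite{notes}), and the identification of ${\cal D}(M)\cap{\cal I}$ with $\hat{\cal O}\cap{\cal I}$ via Lemma~\ref{cont}, Lemma~\ref{arra} and its extension, are exactly the ingredients the paper uses. Your interpolation trick --- joining $\lambda$ to the lamination $\alpha$ of Lemma~\ref{dedan} by $(1-t)\lambda+t\alpha$, which stays in ${\cal D}(M)$ because the support is constant and $i(\cdot,\partial E)$ is linear on measures with fixed support --- is correct and is a clean alternative to the paper's scaling paths.

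However, the engine of your argument has a genuine gap. You take for granted that ${\cal P}(M)=b({\cal GF}(M))$ is path-connected, on the grounds that ${\cal GF}(M)$ is path-connected and $b$ is continuous on it by \cite{kes} and \cite{bodiff}. Neither fact is available: those references (and the paper) give continuity of the bending map only on ${\cal CC}(M)$, the locus of structures with no rank-$1$ cusps, and Ahlfors--Bers \cite{bers} gives connectedness only of ${\cal CC}(M)$; connecting geometrically finite structures with accidental parabolics to ${\cal CC}(M)$ inside ${\cal GF}(M)$, and continuity of $b$ at such structures, are precisely the delicate points the paper avoids by working with ${\cal P}_{nc}(M)=b({\cal CC}(M))$. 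Moreover your path cannot simply be rerouted through ${\cal P}_{nc}(M)$: the lamination $\alpha$ produced by Lemma~\ref{dedan} is obtained by capping weights at $\pi$, so in general it has closed leaves of weight exactly $\pi$, i.e.\ $\alpha\in{\cal P}(M)\setminus{\cal P}_{nc}(M)$, and nothing you have established connects such an $\alpha$ to ${\cal P}_{nc}(M)$ inside ${\cal D}(M)$. The repair is the paper's route: using openness of ${\cal D}(M)$ (resp.\ $\hat{\cal O}$) and the density of arational laminations \cite{interval}, first slide each endpoint to a nearby arational lamination $\alpha_j$; arational laminations have no closed leaves (and never satisfy $(-)$), so the proof of Lemma~\ref{dedan} puts $\frac{2\pi}{\eta}\alpha_j$ directly into ${\cal P}_{nc}(M)$, the scaling segment $t\mapsto(1-t+t\frac{2\pi}{\eta})\alpha_j$ stays in ${\cal D}(M)$ (resp.\ $\hat{\cal O}$), and your perturbation-into-${\cal I}$ step then finishes the proof exactly as in the paper.
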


\begin{proof}
\indent Let $\lambda_1,\lambda_2\in \hat{\cal O}$. By \cite{interval}, the arational measured geodesic laminations are dense in ${\cal ML}(\partial M)$. Since $\hat{\cal O}$ is open, there are two arational measured geodesic laminations $\alpha_1$ and $\alpha_2\in\hat{\cal O}$ such that $\lambda_j$ is connected to $\alpha_j$ by a path $k_j\subset\hat{\cal O}$. \\
\indent Since  $\alpha_j\in\hat{\cal O}\subset{\cal D}(M)$ there is $\eta>0$ such that $i(\alpha_j,\partial E)>\eta$ for any essential disc or annulus $E\subset M$. Since $\alpha_i$ is arational, it has no closed leaf and, by the proof of Lemma \ref{dedan}, we have $\frac{2\pi}{\eta}\alpha_j\in{\cal P}(M)$. Let ${\cal CC}(M)\subset{\cal GF}(M)$ be the set of  hyperbolic metrics uniformizing $M$ and having only rank $2$ cusps. By results of Ahlfors-Bers (\cite{bers}), ${\cal CC}(M)$ is homeomorphic to the cartesian product of the Teichm\"uller spaces of the connected components of  $\partial_{\chi<0} M$, indeed ${\cal CC}(M)$ is pathwise connected. Let ${\cal P}_{nc}(M)$ be the set of measured geodesic laminations lying in ${\cal P}(M)$ and having no closed leaves with weight $\pi$. By \cite{espoir} (see also \cite{fini}) ${\cal P}_{nc}(M)$ is the image of ${\cal CC}(M)$ by the bending map. By \cite{kes} and \cite{bodiff}, the bending map is continuous on ${\cal CC}(M)$ hence ${\cal P}_{nc}(M)$ is pathwise connected. Since $\frac{2\pi}{\eta}\alpha_j$ has no closed leaf, $\frac{2\pi}{\eta}\alpha_j\in{\cal P}_{nc}(M)$, therefore there is a path $\alpha:[0,1]\rightarrow {\cal P}(M)$ such that $\alpha(0)=\frac{2\pi}{\eta}\alpha_1$ and that $\alpha(1)=\frac{2\pi}{\eta}\alpha_2$. Since ${\cal D}(M)$ is open, we can change $\alpha$ so that we have $\alpha(t)\in{\cal I}\cap {\cal D}(M)$ for any $t\in [0,1]$ (cf. \cite{notes}). Thus $\alpha(t)$ is an arational lamination (up to cutting $\partial M$ along a closed leave of $\alpha(t)$ if there is one) lying in ${\cal D}(M)$. From Lemma \ref{arra} we get  $\alpha(t)\in\hat {\cal O}$ for any $t\in [0,1]$.\\
\indent Let $\kappa_j:[0,1]\rightarrow \hat{\cal O}$ be the path $\kappa_j(t)=(1-t+t\frac{2\pi}{\eta})\alpha_j$. The union of the paths $k_j$, $\kappa_j$ for $j=1,2$ and of the path $\alpha([0,1])$ is a path lying in $\hat {\cal O}$ joining $\lambda_1$ to $\lambda_2$.\\
\indent
We have proved that $\hat{\cal O}$ is pathwise connected. Taking  $\lambda_1,\lambda_2\in {\cal D}(M)$ at the beginning of this proof, we get that ${\cal D}(M)$ is also pathwise connected.
\end{proof}

\section{Pleated surfaces}
\begin{theorem}
Let $M$ be an orientable 3-manifold, let $\rho:\pi_1(M)\rightarrow Isom(\Hp^3)$ be a geometrically finite representation uniformizing $N$ and having only rank $2$ maximal parabolic subgroups and let $h:N=\Hp^3/\rho(\pi_1(M))\rightarrow int(M)$ be a homeomorphism; then any measured geodesic lamination $\lambda\in{\cal D}(M)$ is realized by a pleated surface in $N$.
\end{theorem}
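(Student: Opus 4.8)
The plan is to argue by contradiction, extracting from a hypothetical failure of realization a measured geodesic lamination that is forbidden by Lemma \ref{supcond}. First I would dispose of the laminations satisfying condition $(-)$: there $M$ is an $I$-bundle over a pair of pants $P$ and $|\lambda|$ is a multicurve isotopic to a section over $\partial P$, so each of its finitely many components is an essential, non-peripheral simple closed curve, realized by a closed geodesic of $N$; bending $S$ along these disjoint geodesics produces the desired pleated surface directly. From now on I assume $\lambda$ does not satisfy $(-)$, so that Lemma \ref{supcond} applies to it, and I suppose for contradiction that $\lambda$ is not realized.

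Since ${\cal D}(M)$ is open and weighted multicurves are dense in ${\cal ML}(\partial M)$, I would choose multicurves $\lambda_n\in{\cal D}(M)$ with $\lambda_n\to\lambda$. As $\lambda_n\in{\cal D}(M)$, each of its components has positive intersection with the boundary of every essential disc and every essential (possibly parabolic) annulus, hence no component is a meridian, a parabolic, or a trivial curve; therefore $\lambda_n$ is realized by a geodesic multicurve of $N$, and bending $S$ along it gives a pleated surface $f_n:(S,m_n)\to N$ homotopic to $h^{-1}_{|S}$ and realizing $\lambda_n$, where $m_n$ is the induced hyperbolic metric. Each $f_n$ is $1$-Lipschitz and, being pleated and properly homotopic into $N$, has image in the convex core; away from the rank $2$ cusps this is the compact set $N(\rho)^{ep}$. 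If the metrics $m_n$ remain in a compact part of the Teichm\"uller space of $S$, then after passing to a subsequence $f_n$ converges by Arzel\`a--Ascoli to a pleated surface whose pleating locus contains $|\lambda|$, realizing $\lambda$ and contradicting our assumption. Hence the $m_n$ must leave every compact set.

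When $m_n$ degenerates, Thurston's compactification (equivalently the dual tree construction of Section \ref{def}) provides, after rescaling lengths by some $\ell_n\to\infty$ and passing to a subsequence, a measured geodesic lamination $\mu\in{\cal ML}(\partial M)$ with $l_{m_n}(\,\cdot\,)/\ell_n\to i(\,\cdot\,,\mu)$ and a small minimal action of $\pi_1(\partial M)$ on the dual $\R$-tree ${\cal T}_\mu$. Because $f_n$ realizes $\lambda_n$, the lamination maps isometrically, so $l_{m_n}(\lambda_n)=l_\rho(\lambda_n)\to l_\rho(\lambda)<\infty$ stays bounded while $\ell_n\to\infty$; a standard double-limit argument then forces $i(\lambda,\mu)=0$, so $\lambda$ does not cross $|\mu|$ transversely.

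It remains to see that $|\mu|$ produces an annular lamination or one carrying a homoclinic leaf, and this identification is the step I expect to require the most care. The leverage is that the $f_n$ keep their image in the fixed compact set $N(\rho)^{ep}$ while $m_n$ blows up along $\mu$: a curve whose $m_n$-length tends to infinity is sent by $f_n$ to a loop of bounded length in $N$, hence is strongly folded. Passing to lifts in the Floyd--Gromov compactification $\overline{\tilde M}$, the two ends of a lift of a leaf of $\mu$ are pushed together, so in the limit either a single leaf becomes homoclinic or two leaves become biasymptotic. Concretely, the short-in-$N$ curves accumulate in the Hausdorff topology on a geodesic lamination of $\partial M$ that, by Casson's criterion (\cite[Theorem B.1]{espoir}, see also \cite{fini}), contains a homoclinic leaf when the folding comes from a compression, and that is annular (\cite[Lemma C.2]{espoir}) when it comes from an essential annulus; in either case this lamination shares its leaves with, or is disjoint from, $|\mu|$, and so does not cross $\lambda$ transversely. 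This contradicts Lemma \ref{supcond}, which asserts that a lamination of ${\cal D}(M)$ not satisfying $(-)$ meets every annular lamination and every lamination containing a homoclinic leaf transversely. The contradiction shows that $\lambda$ is realized; the delicate points are the passage from the metric degeneration $\mu$ to an honest homoclinic or annular lamination detected by \cite{espoir}, and the bookkeeping needed to keep the pleated surfaces away from the rank $2$ cusps.
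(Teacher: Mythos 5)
Your skeleton agrees with the paper's up to a point: approximate $\lambda$ by multicurves in ${\cal D}(M)$, realize those by pleated surfaces, show the induced metrics cannot degenerate, and use Lemma \ref{supcond} for the final contradiction. But the step you yourself flag as ``requiring the most care'' is not a technical refinement to be filled in later --- it is the whole theorem, and as sketched it has a genuine gap. Casson's criterion (\cite[Theorem B.1]{espoir}) produces a homoclinic leaf only from a sequence of \emph{meridians} converging in the Hausdorff topology, and \cite[Lemma C.2]{espoir} produces an annular lamination only from a sequence of boundaries of \emph{essential annuli}; your argument never exhibits either sequence. What the degeneration gives you is a Thurston limit $\mu$ with $i(\lambda,\mu)=0$, and that alone is not contradictory: membership in ${\cal D}(M)$ forbids zero intersection only with boundaries of essential discs and annuli, so you must still prove that the degeneration ``comes from a compression or an essential annulus''. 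Asserting this is assuming what is to be proved. Worse, your folding picture does not even see the dangerous degeneration: the surfaces could a priori wrap or twist in the part of $S$ not filled by $\lambda$, which is bounded in moduli space (no curves get short, no pinching lamination in any useful sense) yet unbounded in Teichm\"uller space; converting that into topological information about $M$ is exactly where the work lies.

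The paper closes this hole with two devices absent from your proposal. First, it realizes not $\lambda_n$ but $\lambda_n\cup\gamma$, where $\gamma$ is a multicurve with $i(\lambda,\gamma)=0$ having a maximal number of components; $\gamma$ pins the surface down on the complement of $\lambda$. Second, the compactness argument is split in two. Boundedness in moduli space uses a trichotomy for curves $c_n$ with $l_{s_n}(c_n)\to 0$: meridians are excluded by Casson plus Claim \ref{ptitcourb} and Lemma \ref{supcond}; parabolic curves by $i(c_n,\lambda)\geq\eta$; and every remaining $c_n$ has a closed geodesic representative in $N$, whose length is bounded below because $\rho$ is geometrically finite --- then Mumford's lemma applies. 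Boundedness in Teichm\"uller space is then a statement about the comparison mapping classes $\psi_n=\varphi_n^{-1}\circ\varphi_{n_0}$: Waldhausen (\cite{bois}) handles the $\pi_1$-injective complementary regions of $\gamma$; in a compressible region $R$ the meridians fed to Casson come from the topology, namely a fixed meridian $m\subset R$ and its images $\psi_n(m)$ (meridians because $\psi_n$ extends over $M$), with Thurston's compactification used only on $R$; and the residual Dehn twists along $\gamma$ are killed by Johannson's finite-order theorem (\cite[Proposition 27.1]{jojo}), using that a neighbourhood of $\gamma$ carries no boundary of an essential annulus since $\lambda\in{\cal D}(M)$. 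Two smaller points: ``bending along a geodesic multicurve'' already needs Lemma \ref{supcond}, since one must extend to a finite-leaved lamination with triangular complementary regions and check the spun leaves have distinct endpoints (cf. \cite[Theorem 5.3.6]{ceg}); and in your bounded case you should also demand $|\lambda_n|\to|\lambda|$ in the Hausdorff topology so that the limit pleated surface actually realizes $\lambda$. Your direct treatment of condition $(-)$, on the other hand, is fine.
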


\begin{proof}
If $M$ is a compression body and $\lambda$ is arational, then $\lambda$ lies in the Masur domain and the theorem has been proved by Otal (\cite{chefjeune}). If $M$ is boundary irreducible, then any geodesic lamination is realized in $N$ (see \cite[chap. 5]{ceg}). In order to prove our general statement, we will follow the main lines of Otal's proof.

\begin{lemma}
Let $\lambda\in{\cal D}(M)$ be a weighted multi-curve, then $\lambda$ is realized by a pleated surface in $N$.
\end{lemma}
\begin{proof}
Let us extend $|\lambda|$ to  a geodesic lamination $L$ (namely $|\lambda|\subset L$) such that all the components of $\partial M-L$ are  triangles and that $L$ has finitely many leaves. Since $\lambda\in {\cal D}(M)$ and since $\rho$ has only rank $2$ cusps, any closed leaf of $L$ is homotopic  to a closed geodesic in $N$. Let $S\subset M$ be a properly embedded surface homeomorphic and homotopic to $\partial M$ and let us change the restriction of $h$ to $S$ by a homotopy  in order to get a map $f:S\rightarrow N$ mapping the closed leaves of $L$ into closed geodesics. For each connected component of $S$, let us lift this to a map  $\hat f:\Hp^2\rightarrow\Hp^3$; this map $\hat f$ defines a map from the endpoints of the lifts of the leaves of $L$ to $L_\rho$. Furthermore, if $\hat l\in\Hp^2$ is a lift of a leaf of $L$, by Lemma \ref{supcond}, the images of its two endpoints are distincts. Following \cite[Theorem 5.3.6]{ceg}, this allows us to construct a pleated surface realizing $L$. 
\end{proof}

\indent
Now let us consider the general case. Let $\lambda\in{\cal D}(M)$ be a measured geodesic lamination; let $\lambda_n$ be  a sequence of weighted multi-curves such that $\lambda_n\longrightarrow\lambda$ in ${\cal ML}(\partial M)$ and that $|\lambda_n|\rightarrow |\lambda|$ in the Hausdorff topology. Since ${\cal D}(M)$ is open, $\lambda_n\in{\cal D}(M)$ for large $n$. Let $\gamma$ be a weighted multi-curve with a maximal number of leaves such that $i(\lambda,\gamma)=0$; since $\lambda_n\in{\cal D}(M)$  for large $n$, $\lambda_n\cup\gamma$ is also a measured geodesic lamination lying in ${\cal D}(M)$. By the previous lemma, $\lambda_n\cup\gamma$ is realized by a pleated surface  $f_n: S\rightarrow N$. We will show that a subsequence of $(f_n)$ converges to a pleated surfaces realizing $\lambda$.\\
\indent
 Let  us denote by $s_n$ the metric on $S$ induced by the map $f_n:S\rightarrow N$ and let us show that $(s_n)$ contains a converging subsequence. First we will prove that the sequence of metrics $(s_n)$ is bounded in the modular space. By Mumford's Lemma, it is sufficient to prove that the injectivity radius of $s_n$ is bounded from below.

\begin{claim}	\label{ptitcourb}
Let $(c_n)$ be a sequence of curves such that $l_{s_n}(c_n)\longrightarrow 0$ and let us extract a subsequence $(c_n)$ which converges in the Hausdorff topology to a geodesic lamination $C$; then $C$ does not intersects $\lambda$ transversely.
\end{claim}

\begin{proof}
Let assume the contrary and let $c$ be a leaf of $C$ intersecting $\lambda$ transversely. Since $\lambda$ is recurrent, we can consider a segment $k=k([0,1])$ of  $|\lambda|$ such that $k\cap C=\partial k$ and that $\frac{dk}{dt}(0)$ is close (for some reference metric on $S$) to $-\frac{dk}{dt}(1)$ and a short segment $\kappa$  of $c$ joining the ends of $k$ so that we get a closed curve  $d=k\cup\kappa$. Since $\lambda_n\longrightarrow\lambda$ and $ c_n\longrightarrow C$, there exists arcs $k_n\subset\lambda_n$ and $\kappa_n\subset c_n$ near $k$ and $\kappa$ such that $d_n=k_n\cup\kappa_n$ is homotopic on $S$ to $d$. Since $l_{s_n}( c_n)\longrightarrow 0$, $ c_n$ is the core of a very deep Margulis tube and $l_{s_n}(k_n)\longrightarrow\infty$. Since $l_{s_n}(\kappa_n)\leq l_{s_n}( c_n)\longrightarrow 0$ and $f_n(k_n)\subset f_n(\lambda_n)$ is a geodesic arc, $f_n(d_n)=f_n(k_n\cup\kappa_n)$ is a quasi-geodesic and is very close to the geodesic $d^*_n$ of $N$ in its homotopy class. This implies that $l_{\rho}(d^*_n)\longrightarrow\infty$ but $d_n$ is homotopic to $d$ so $d_n^*=d^*$ giving the expected contradiction.
\end{proof}

Let $(c_n)$ be a sequence of curves such that $l_{s_n}(c_n)\longrightarrow 0$. If we can extract a converging (in the Hausdorff topology) subsequence such that all the $c_n$ are meridians then, by Casson's criterion (cf. \cite{chefjeune}, \cite[Theorem B.1]{espoir}), the limit contains a homoclinic leaf. By Lemma \ref{supcond} such a homoclinic leaf intersects $\lambda$ transversely contradicting Claim \ref{ptitcourb}. This implies that for large $n$, the $c_n$ are not meridians. If we can extract a converging subsequence such that all the $c_n$ are parabolic curves, then $i(c_n,\lambda)>\eta$ for any $n$, leading to the same contradiction.\\
\indent It follows that, for large $n$, each $f_n(c_n)$ is homotopic to a closed geodesic $ c_n^*$ of $N$. But this would mean that $l_{\rho}( c_n^*)\longrightarrow 0$ and since $N$ is geometrically finite, there is a uniform lower bound for the length of a closed geodesic.  We get then from Mumford's Lemma (\cite[Proposition 3.2.13]{ceg}) :
\begin{claim}
The sequence $(s_n)$ is bounded in the moduli space.
\end{claim}

Let us now show that $(s_n)$ is bounded in the Teichm\"uller space. By the previous claim, there exists a sequence $(\varphi_n)$ of diffeomorphisms such that, up to extracting a subsequence, $(\varphi_n^* s_n)$ converges in the Teichm\"uller space to a metric $s'_{\infty}$. By construction $l_{\varphi_n^* s_n} (\varphi_n^{-1}(\gamma))=l_{s_n}(\gamma)=l_{\rho}(\gamma)$, therefore the $s'_{\infty}$-length of the multi-curve $\varphi^{-1}_n (\gamma)$ is bounded. This implies that we can choose some $n_0$ and a subsequence such that any diffeomorphism $(\varphi_n^{-1}\circ\varphi_{n_0})$ preserves this multi-curve, component by component.\\
\indent For large $n$,  $\lambda_n$ intersects transversely all the parabolic curves. Therefore $\lambda_n$ lies in the thick part of $N$ which is compact. It follows that all the $f_n(S)$ intersect the same compact subset of $N$. Using Ascoli's theorem we can choose a subsequence of $(\varphi_n)$ such that the sequence of pleated surfaces $(f_n\circ\varphi_n)$ converges. This implies that the maps $f_n\circ\varphi_n$ are homotopic for $n$ sufficiently large. Thus, up to changing $n_0$, the diffeomorphisms $\psi_n=\varphi_n^{-1}\circ\varphi_{n_0}$ are homotopic in $M$ to the identity. Let $R$ be a complementary region of $\gamma$. If the map $i^*:\pi_1(R)\rightarrow\pi_1(M)$ induced by the inclusion is injective, then by \cite{bois}, $\psi_{n|R}$ is isotopic to the identity in $S$. If the map $i^*:\pi_1(R)\rightarrow\pi_1(M)$ is not injective, $R$ contains a meridian. Since $\lambda\in{\cal D}(M)$, $R$ must contain a component $\lambda^i$ of $\lambda$ and since $\gamma$ has a maximal number of components, $\lambda^i$ must be arational in $R$. Let us call $r_n$ the restriction of $s_n$ to $R$ and suppose that the sequence $(r_n)$ is not bounded in Teichm\"uller space. Since the length of $\partial R$ is bounded, we can use Thurston's compactification and assume that $(r_n)$ tends to a measured geodesic lamination $\nu$. Since $l_{r_n}(\lambda_n\cap R)=l_{\rho}(\lambda_n\cap R)\leq l_{r_{n_0}}(\lambda_n\cap R)\rightarrow l_{r_{n_0}}(\lambda^i)$, $i(\nu,\lambda^i)=0$ and $\nu$ and $\lambda^i$ share the same support.\\
\indent
 Let $m\subset R$ be a meridian. Then $m_n=\psi_n(m)$ is homotopic to $m$ and therefore $(m_n)$ is a sequence of meridians. We can assume that $(m_n)$ converges in ${\cal PML}$ to a projective measured lamination represented by $\mu$. Since $(\psi_n^*s_n)$ converges, then $l_{s_n}(m_n)=l_{\psi_n^*s_n}(\psi_n^{-1}(m_n))=l_{\psi_n^*s_n}(m)$ converges and therefore $i(\mu,\nu)=0$. Since $\nu$ and $\lambda^i$ have the same support and since $\lambda^i$ is arational in $R$, this implies that $\mu$ and $\lambda^i$ have the same support. But the Casson's criterion (c.f. \cite{chefjeune}, \cite[Theorem B.1]{espoir}) says that there exists a simple geodesic $l\subset R$ which is homoclinic and does not intersect $\mu$ transversely. This contradicts Lemma \ref{supcond} and proves that the sequence $(r_n)$ is bounded.\\
\indent
This applies to each component of $\partial M-\gamma$. It follows that we can choose the $\psi_n$ such that each one is the composition of Dehn twists along the leaves of $\gamma$. We have seen above that the $\psi_n$ are homotopic to the identity; by \cite{bois}, each $\psi_n$ can be extended to a homeomorphism of the whole manifold $M$. Let ${\cal V}\subset S$ be a small neighbourhood of $\gamma$; since $\lambda\subset{\cal D}(M)$, ${\cal V}$ does not contain the boundary of any essential annulus. It follows then from \cite[Proposition 27.1]{jojo} that, up to isotopy, each $\psi_n$ has finite order. Since the $\psi_n$ are compositions of Dehn twists along disjoint curves, they can not have finite order except when they are isotopic to the identity.  We get from \cite{ceg}  that a subsequence of $(f_n)$ converges to a pleated surface realizing $\lambda$.
\end{proof}

Let $f:S\rightarrow N$ be a pleated surface realizing a geodesic lamination $L$. Let $\Pe (N)$ be the tangent line bundle of $N$. We define a map $\Pe  f$ from $L$ to  $\Pe (N)$ by mapping a point $x\in L$ to the direction of the unit vector tangent to $f(L)$ at $f(x)$.\\
\indent
The following injectivity theorem has been proved by Thurston (\cite{thui}) when $M$ is boundary irreducible and by Otal (\cite{chefjeune}) when $M$ is a compression body and $\lambda\in\hat{\cal O}$.

\begin{theorem}
Let $\lambda\in{\cal D}(M)$ be a measured geodesic lamination not satisfying the condition $(-)$, let $L$ be a geodesic lamination containing the support of $\lambda$ and let\linebreak $f:\partial M\rightarrow N$ be a pleated surface realizing $L$. Then the map from $\Pe f:L\rightarrow \Pe (N)$ is a homeomorphism into its image.
\end{theorem}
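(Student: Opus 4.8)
The plan is to argue by contradiction: I would show that a failure of injectivity of $\Pe f$ produces either an annular sublamination of $L$ or a homoclinic leaf in $L$, and then derive a contradiction from Lemma \ref{supcond}. Since $L$ is compact and $\Pe f$ is continuous, a continuous injection into the Hausdorff space $\Pe(N)$ is automatically a homeomorphism onto its image; so it is enough to prove that $\Pe f$ is injective.

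First I would suppose that $\Pe f(x)=\Pe f(y)$ with $x\neq y$ and lift everything to the universal covers, obtaining an equivariant map $\tilde f:\partial\tilde M\rightarrow\Hp^3$. Choosing the lifts $\tilde x,\tilde y$ so that $\tilde f(\tilde x)=\tilde f(\tilde y)$ with a common tangent direction there, I would use that $f$ realises $L$ isometrically: the restriction of $\tilde f$ to any single lift of a leaf is an isometric embedding onto a geodesic of $\Hp^3$, hence injective. Consequently $\tilde x$ and $\tilde y$ cannot lie on the same lift, so they lie on two distinct lifts $\tilde l_1\neq\tilde l_2$ (of possibly the same or of different leaves of $L$, and including the case of a closed leaf). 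As both lifts pass through the common image point with the same tangent line and are sent to geodesics, they have one and the same oriented geodesic image $g\subset\Hp^3$.

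The heart of the matter is to promote this coincidence in $\Hp^3$ into a coincidence of endpoints in the Floyd--Gromov compactification $\overline{\tilde M}$. Because $\tilde f$ is equivariant and homotopic to the canonical map $h:\tilde M\rightarrow C(\rho)^{ep}$ defining $\overline{\tilde M}$, I would argue that each half-leaf of $\tilde l_1$ and of $\tilde l_2$ has a well defined endpoint in $\overline{\tilde M}$, namely one of the two endpoints $g_-,g_+\in S^2$ of $g$. Then $\tilde l_1$ and $\tilde l_2$ share both endpoints: if they are lifts of two distinct leaves of $L$ they form a biasymptotic pair, so $L$ is annular, while if they are two lifts of a single leaf $l$ this coincidence exhibits $l$ as a homoclinic leaf. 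The hard part will be exactly this boundary analysis---controlling the endpoints of the half-leaves in $\overline{\tilde M}$ via the results of \cite{espoir}---since the developing map of a pleated surface need not extend continuously over all of $\partial\tilde M$, and one must show that the relevant half-leaves do acquire a single well defined endpoint equal to $g_-$ or $g_+$.

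Finally, in either case $L$ contains an annular sublamination or a homoclinic leaf, call it $A\subset L$. By Lemma \ref{supcond}, since $\lambda\in{\cal D}(M)$ does not satisfy the condition $(-)$, the lamination $\lambda$ must intersect $A$ transversely. But $|\lambda|$ and $A$ are both sublaminations of the single geodesic lamination $L$, so their leaves are pairwise disjoint and cannot cross one another. This contradiction shows that $\Pe f$ is injective, hence a homeomorphism onto its image.
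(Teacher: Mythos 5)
Your proof follows essentially the same route as the paper's: reduce to injectivity using continuity and compactness of $L$, lift to the universal cover, use that $f$ is isometric on the leaves of $L$ to force the two points onto distinct lifts $\hat l_1\neq\hat l_2$ having the same geodesic image in $\Hp^3$, conclude that $L$ is annular (or contains a homoclinic leaf), and contradict Lemma \ref{supcond} because $|\lambda|\subset L$ cannot intersect $L$ transversely. The only difference is one of emphasis: the step you flag as the ``hard part'' --- promoting the coincidence of image geodesics to a coincidence of well defined endpoints in $\overline{\tilde M}$, via the bounded equivariant homotopy between the pleated map and the map $h$ --- is precisely what the paper compresses into the single assertion ``It follows that $L$ is an annular lamination,'' so your sketch is, if anything, more explicit at that point.
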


\begin{proof}
Since the map $f$ reduces the length, it is easy to see that $\Pe f$ is a continuous map and since $L$ is compact, we need only to show that $\Pe f$ is injective.\\
\indent
Let us assume the contrary, there are two points $u$ and $v\subset L$ such that\linebreak $\Pe f(u)=\Pe f(v)$; let $\hat f:\Hp^2\rightarrow\Hp^3$ be a lift of $f$ and let $\hat u$ and $\hat v$ be lifts of $u$ and $v$ such that $\Pe\hat  f(\hat u)=\Pe\hat f(\hat v)$. Since $\hat f$ is an isometry on the preimage of $L$, it is injective on each leaf of the preimage of $L$. Therefore $\hat u$ and $\hat v$ lie in two different leaves $\hat l_1$ and $\hat l_2$ of the preimage of $L$. Since $\Pe\hat f(\hat u)=\Pe\hat f(\hat v)$, then $\hat f(\hat l_1)=\hat f(\hat l_2)$. It follows that $L$ is an annular lamination and since $L$ does not intersect $\lambda\in{\cal D}(M)$ transversely, this contradicts Lemma \ref{supcond}.
\end{proof}

\begin{rem}
If $\lambda$ satisfies the condition $(-)$, the same is true for $\lambda$ but not for any geodesic lamination containing $\lambda$.
\end{rem}

\section{Action on $\R$-trees}
\indent
We will prove the following :

\begin{proposition}	\label{reali}
Let ${\cal T}$ be a real tree, let $\pi_1(M)\times {\cal T}\rightarrow{\cal T}$ be a small minimal action and let $\lambda\in{\cal D}(M)$ be a measured geodesic geodesic lamination. Then at least one connected component of $\lambda$ is realized in ${\cal T}$.
\end{proposition}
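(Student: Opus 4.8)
\noindent The plan is to argue by contradiction: I assume that no connected component of $\lambda$ is realized in $\mathcal{T}$ and aim to produce a geodesic lamination that does not meet $\lambda$ transversely and is either annular or contains a homoclinic leaf, which is forbidden by Lemma~\ref{supcond}. A useful structural point to keep in view from the outset is that I will realize $\lambda$ only as a limit of \emph{multi-curves}, so that any lamination I extract at the end is a Hausdorff limit of multi-curves; this is what will let the remark following Lemma~\ref{supcond} close the argument even when $\lambda$ satisfies the condition $(-)$ (in that case homoclinic leaves still meet $\lambda$, and an annular lamination failing to meet $\lambda$ transversely cannot be a Hausdorff limit of multi-curves).

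First I would fix a train track $\tau$ carrying $\lambda$, chosen so that each component $\lambda^i$ is carried by a subtrack $\tau^i$, and approximate $\lambda$ by weighted simple closed curves $c_n$ carried by $\tau$ with $|c_n|\to|\lambda|$ in the Hausdorff topology. Each $c_n$ represents a conjugacy class in $\pi_1(M)$ and hence acts on $\mathcal{T}$ as an isometry; guided by its axis (or fixed-point set) and by the equivariant projection $\pi_\lambda:\Hp^2\to\mathcal{T}_\lambda$ to the dual tree, I would build an equivariant continuous map $\phi_n:q^{-1}(\tau)\to\mathcal{T}$ that is constant on the ties and monotone on the rails. After normalising the $\phi_n$ to agree on a fixed fundamental domain of $q^{-1}(\tau)$ and extracting a convergent subsequence (using the smallness of the $\pi_1(M)$-action to keep the branching locus under control), I expect to obtain a weak realization $\pi:q^{-1}(\tau)\to\mathcal{T}$ of $\tau$. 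The force of the contradiction hypothesis is then exactly that $\pi$ cannot be upgraded to an injective-on-leaves map on \emph{any} subtrack $\tau^i$: on each $\tau^i$ the map must fold, that is, there is a leaf of $q^{-1}(\lambda^i)$ on which $\pi$ is not injective, or two distinct leaves of $q^{-1}(\lambda^i)$ whose $\pi$-images coincide.

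Next I would convert this folding in $\mathcal{T}$ into a configuration at infinity. Since $\pi$ is monotone and non-constant on the rails, a leaf $\tilde l$ of $q^{-1}(\lambda)$ that is collapsed (its image being a point, or more generally a bounded subset of $\mathcal{T}$) must have both of its ends sent into a bounded part of $\mathcal{T}$; comparing the metric of $\mathcal{T}$ with the Floyd--Gromov compactification $\overline{\tilde M}$, both of which record how the ends of $\tilde M$ are separated, this forces the two ends of $\tilde l$ to have well defined endpoints that are identified, so $\tilde l$ is homoclinic. Symmetrically, if $\pi$ identifies two distinct leaves $\tilde l_1$ and $\tilde l_2$, these acquire the same pair of well defined endpoints, hence are biasymptotic, and the lamination carrying them is annular. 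In either case the offending lamination lies in the Hausdorff limit of the $|c_n|$, hence does not meet $\lambda$ transversely, and is a Hausdorff limit of multi-curves; Lemma~\ref{supcond} together with its subsequent remark is thereby contradicted, proving that at least one $\lambda^i$ is realized.

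The main obstacle I anticipate is the two-way dictionary between folds in $\mathcal{T}$ and genuine endpoint identifications in $\overline{\tilde M}$: one must show that non-realization really forces non-injectivity along a single leaf (rather than a harmless failure confined to the switches that could be removed by a further fold), and that the resulting collapse persists to the boundary, giving \emph{well defined} endpoints in the Floyd--Gromov compactification rather than merely bounded images. Smallness of the action is what keeps an arc from being stabilised by a large subgroup, so that the fold is combinatorially finite and the limiting map $\pi$ is a bona fide weak realization; and the hypothesis $\lambda\in\mathcal{D}(M)$, through Lemma~\ref{supcond}, is precisely what excludes the degenerate outcome, so that no component can fail to be realized.
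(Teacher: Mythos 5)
Your proposal has a genuine gap, and it sits exactly where the real work of the proposition lies: the construction of the weak realization. You propose to build equivariant maps $\phi_n:q^{-1}(\tau)\rightarrow{\cal T}$ ``guided by the axes'' of approximating curves, normalise, and extract a limit which you ``expect'' to be a weak realization of $\tau$ in ${\cal T}$. Nothing in the argument justifies this expectation, and it cannot be justified without using $\lambda\in{\cal D}(M)$: for an arbitrary lamination such a limit will degenerate (branches collapsed, opposite branches at a switch folded onto each other), and smallness of the $\pi_1(M)$-action does not help, because the relevant action is that of the subsurface groups $\pi_1(S)$, $S\subset\partial M$, which are \emph{not} injected into $\pi_1(M)$ when $S$ is compressible, so their actions on ${\cal T}$ are not a priori small. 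This is precisely why the paper proceeds differently: it invokes Morgan--Otal to produce dual laminations $\beta_n$ with morphisms $\phi_n:{\cal T}_{\beta_n}\rightarrow{\cal T}$, proves the key transversality lemma (Lemma \ref{trans}: the Hausdorff limit $B$ of $|\beta_n|$ meets $|\lambda|$ transversely --- this is where ${\cal D}(M)$, Lemma \ref{supcond}, Skora's theorem and the characteristic submanifold enter), and only then obtains a train track $\tau^i$ whose projection $\pi_{\beta_n}$ is a weak realization in ${\cal T}_{\beta_n}$, with $\phi_n\circ\pi_{\beta_n}$ a weak realization in ${\cal T}$. Note also a logical incoherence in your plan: a weak realization, by definition, is monotone and non-constant on rails with disjoint interiors of images of opposite branches, so it cannot ``fold''; and by Otal's theorem (the paper's final step) a weak realization of $\tau^i$ is already homotopic to a realization of $\lambda^i$. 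So if you could build the weak realization, you would be done immediately, and your entire contradiction analysis would be empty; the contradiction hypothesis does you no work in constructing $\pi$.

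The second gap is the claimed dictionary between folds in ${\cal T}$ and identified endpoints in the Floyd--Gromov compactification. Here ${\cal T}$ is an abstract small minimal action, not a Morgan--Shalen limit of representations, so its metric bears no a priori relation to distances in $\tilde M$ or to $\overline{\tilde M}$; a leaf with bounded image in ${\cal T}$ need not have well defined endpoints in $\overline{\tilde M}$, let alone identified ones, and two leaves with equal images in ${\cal T}$ need not be biasymptotic. In the paper, homoclinic leaves are produced from compressibility of the embraced surface $S(B^i)$ (Kleineidam--Souto's argument, Claim \ref{uncon}), and annularity of $\beta$ comes from the $I$-bundle structure of the characteristic submanifold via Skora's theorem --- both are statements about laminations on $\partial M$, not about the tree. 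Your instinct to reduce to Lemma \ref{supcond} and its remark is the right target, but the bridge you propose from non-realization in ${\cal T}$ to homoclinic or annular configurations in $\overline{\tilde M}$ does not exist as stated.
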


\begin{proof}
Let us first notice that this result has been proved by G. Kleineidam and J. Souto (\cite{boches} and \cite{boches1}) when $M$ is a compression body and $\lambda$ lies in the Masur domain. The general case need just a reorganization of the proof of \cite[Proposition 6]{espoir}. Here we will sketch the proof  which consists essentially in putting together ideas of \cite{meister1} and of \cite{boches}.\\
\indent
If $\lambda$ satisfies the condition $(-)$ then the elements of $\pi_1(M)$ corresponding to the leaves of $\lambda$ form a generating subset of $\pi_1(M)$. In this case Proposition \ref{reali} is a straightforward consequence of \cite{mors1}.\\
\indent
Let us assume that $\lambda$ does not satisfies the condition $(-)$. For $c\in\pi_1(M)$ let us denote by $\delta_{\cal T}(c)$ the distance of translation of $c$ on ${\cal T}$.
Let $S$ be a connected component of $\partial M$ with $\chi(S)<0$; the inclusion $i_*:\pi_1(S)\rightarrow\pi_1(M)$ provides us with an action of $\pi_1(S)$ on ${\cal T}$. By \cite{chefm}, there exists a measured geodesic lamination $\beta\in{\cal ML}(S)$ and a morphism $\phi:{\cal T}_{\beta}\rightarrow {\cal T}_S$ from the dual tree of $\beta$ to the minimal subtree of ${\cal T}$ that is invariant by the action of $\pi_1(S)$. Since the action of $\pi_1(S)$ is not a priori small, $\phi$ is not, a priori, an isomorphism and there might be many laminations $\beta$ with this property. We will consider such a lamination $\beta$ which is adapted to our problem.\\
\indent
Let $(\lambda_n)$ be a sequence of weighted multi-curves converging to $\lambda$ in ${\cal ML}(\partial M)$ such that $(|\lambda_n|)$ converges to $|\lambda|$ in the Hausdorff topology. For each irrational  sublamination $\lambda^i$ of $\lambda$  let us denote by $S(\lambda^i)$ the surface embraced by $|\lambda^i|$. For $n$ large enough such that $|\lambda_n|$ does not intersect $\partial'\bar S(\lambda)$ transversely, let us add simple closed curves to $\partial'\bar S(\lambda)\cup|\lambda_n|$ in order to obtain a multi-curve $L_n$ whose complementary regions are pairs of pants. By $\cite{chefm}$, there are measured geodesic laminations $\beta_n\in{\cal ML}(\partial M)$ and equivariant morphisms $\phi_n:{\cal T}_{\beta_n}\rightarrow {\cal T}$ such that for any leaf $l_n$ of $L_n$, either $\delta_{\cal T}(l_n)>0$ and the restriction of $\phi_n$ to the axis of $l_n$  is an isometry or $\delta_{\cal T}(l_n)=0$ and $i(l_n,\beta_n)=0$, see \cite[\S 4.1]{espoir} for more details.\\
\indent
Extract a subsequence such that $(|\beta_n|)$ converges to a geodesic lamination $B$ in the Hausdorff topology. The first step of the proof is to show that $B$ intersects $|\lambda|$ transversely, this will allow us to follow \cite{boches} by using a realization of a train track carrying $\lambda$ to prove the proposition.

\begin{lemma}	\label{trans}
The geodesic lamination $B$ intersects $|\lambda|$ transversely.
\end{lemma}

\begin{proof}
The proof is done by contradiction; let us assume that $|\lambda|$ does not intersect $B$ transversely.\\
\indent
If $B$ is a multi-curve, then for large $n$, $|\beta_n|=B$ and $\beta_n$ does not intersect $\lambda$ transversely. By the definition of ${\cal D}(M)$, a small neighbourhood of $B$ does not contain any essential disk, annulus or Moebius band. By \cite[Corollary IV 1.3]{mors1}, this implies that the action of $\pi_1(M)$ fixes a point of ${\cal T}$. This would contradict the assumption that this action is minimal.\\
\indent
Let us now consider the case where $B$ is not a multi-curve. The first step in this case is to prove that $S(B)$ is incompressible for any connected component $B^i$ of $B$. This will implies that a subsequence of $(|\beta_n|)$ is constant.

\begin{claim}	\label{uncon}
If $B$ does not intersects $|\lambda|$ transversely, then for any connected component $B^i$ of $B$, the surface $S(B^i)$ is incompressible.
\end{claim}

\begin{proof}
Since we have assumed that $B$ does not intersect $|\lambda|$ transversely, if $B^i$ is a closed curve, the claim follows from the definition of ${\cal D}(M)$.\\
\indent
Let $B^i$ be a component of $B$ which is not a closed curve and let us assume that $S(B^i)$ contains a meridian. It follows from the ideas of \cite{boches}, that $S(B^i)$ contains a homoclinic leaf $h$ which does not intersect $B^i$ transversely (see \cite[Lemma 4.3]{espoir} for details). Since we have assumed that $B$ does not intersect $\lambda$ transversely, then $|\lambda|\cap S(B^i)\subset B^i$. Especially, $h$ does not intersect $\lambda$ transversely, contradicting Lemma \ref{supcond}.
\end{proof}

\indent
Let us explain how Claim \ref{uncon} implies that for large $n$ the support of $\beta_n$ does not depend on $n$. Let $B^i$ be a connected component of $B$; if $B^i$ is a closed leaf then for large $n$, $B^i\subset|\beta_n|$. Let us next assume that $B^i$ is not a closed leaf; by claim \ref{uncon}, $S(B^i)$ is incompressible, hence the action of $i_*(\pi_1(S(B^i))$ on its minimal subtree ${\cal T}_{S(B^i)}\subset{\cal T}$ is small. Since $B$ does not intersect $\partial'\bar S(B^i)$, for large $n$, $\beta_n$ does not intersect $\partial'\bar S(B^i)$. It follows that for each component $d$ of $\partial'\bar S(B^i)$, the action of $i_*(d)$ has a fixed point in ${\cal T}_ {S(B^i)}$. This allows us to apply Skora's theorem \cite{skora} which says that $\beta_n^i=\beta_n\cap S(B^i)$ is dual to the action of $i_*(\pi_1(S(B^i))$ on ${\cal T}_{S(B^i)}$. Doing this for each component of $B$, we obtain that, for large $n$, $|\beta_n|$ does not depend on $n$. Let us endow $B$ with the measure of one of the $\beta_n$ and let us call $\beta$ the measured geodesic lamination thus obtained.\\

\indent
The last step in the proof of Lemma \ref{trans} is to show that $|\beta|=B$ is annular. Since we have assumed that $B$ does not intersect $|\lambda|$ transversely, this will contradict the fact that $\lambda\in{\cal D}(M)$ (Lemma \ref{supcond}).

\begin{claim}
The measured geodesic lamination $\beta$ is annular
\end{claim}

\begin{proof}
By hypothesis $\beta$ does not intersect $\lambda$ transversely hence $S(\beta)\cap|\lambda|\subset|\beta|$.\\
\indent
Since $S(\beta)$ is incompressible, we might consider a characteristic submanifold $W$ of $(M,S(\beta))$ (cf. \cite{jojo} and \cite{jacs}). Such a characteristic submanifold is a union of essential $I$-bundles  and Seifert fibered manifolds such that any essential annulus in $(M,S(\beta))$ can be homotoped in $W$. For each component $\Sigma$ of $\partial M-S(B)$, $i_*(\Sigma)$ fixes a point in ${\cal T}$, hence by \cite{thuiii} (see also \cite[theorem IV 1.2]{mors2}) $W$ can be isotoped in such a way that we have $\beta\subset W\cap\partial M$.\\
\indent
We are considering the case where $\beta$ is not a multi-curve, therefore it contains an irrational sublamination $\beta^1$. Since the Seifert fibered manifolds composing $W$ intersect $\partial M$ in annuli, $|\beta^1|$ lies in a component $W^1$ of $W$ which is an essential $I$-bundle over a compact surface $F$ : $W^1=F\times I$. Let us denote by $p:F\times\partial I\rightarrow F$ the projection along the fibers. By Skora's theorem \cite{skora}, for any component $\Sigma$ of $W^1\cap\partial M$, $\Sigma\cap\beta$ is dual to the action of $i_*(\pi_1(S))$ on ${\cal T}_{\Sigma}$. Since this action factorizes through the action of $\pi_1(W^1)=\pi_1(F)$, there is a measured geodesic lamination $\beta'\in{\cal ML}(F)$ such that $\beta\cap\partial W^1\supset p^{-1}(\beta')$. Since the lamination $p^{-1}(\beta')$ is annular, $\beta$ is annular (compare with \cite[Lemma 14]{meister1}).
\end{proof}

\indent
This claim concludes the proof of Lemma \ref{trans}. 
\end{proof}

\indent
Let us now complete the proof of Proposition \ref{reali}. Let $\lambda^i$ be a connected component of $\lambda$ that intersects $B$ transversely. Let us denote by $\pi_{\beta_n}:\Hp^2\rightarrow{\cal T}_{\beta_n}$ the projection associated to the dual tree of $\beta_n$ (as defined in \S \ref{def}). Since $B$ intersects $\lambda^i$ transversely, the construction in \cite[chap 3]{chefjeune} yields a train track $\tau^i$ such that for large $n$, $\pi_{\beta_n}$  is a weak realization of $\tau^i$ in ${\cal T}_{\beta_n}$.\\
\indent
Let $l_n$ be a component of $L_n\cap S(\lambda^i)$. Up to extracting a subsequence, $l_n$ converge in the Hausdorff topology to a geodesic lamination $L'\subset S(\lambda^i)$ that does not intersect $\lambda^i$ transversely (by the choice of $L_n$). Therefore $|\lambda^i|\subset L'$. If up to extracting a subsequence, $i_*(l_n)$  has a fixed point in ${\cal T}$; then $i(\beta_n,l_n)=0$. Letting $n$ tends to $\infty$, we would get that $B$ does not intersect $|\lambda^i|$ transversely, contradicting our choice of $\lambda^i$.\\
\indent
It follows from the previous paragraph that the restriction of $\phi_n$ to $l_n$ is an isometry. For large $n$, each branch of $\hat\tau$ intersects transversely a lift of $l_n$. The fact that the restriction of $\phi_n$ to the axis of $l_n$ is an isometry implies that $\phi_n\circ\pi_{\beta_n}$ is a weak realization of $\tau^i$ in ${\cal T}$ (compare with \cite[Lemma 11]{boches}). By \cite{chefjeune} this map $\phi_n\circ\pi_{\beta_n}$ is homotopic to a realization of $\lambda^i$ in ${\cal T}$.
\end{proof}

\indent
Let $\rho_n:\pi_1(M)\rightarrow Isom(\Hp^3)$ be a sequence of representations containing no converging subsequence; in \cite{mors1}, J. Morgan and P. Shalen described a way to associate a small minimal action of $\pi_1(M)$ on an $\R$-tree to some subsequence of $(\rho_n)$. This can be stated in the following way : the sequence $(\rho_n)$ tends to the action $\pi_1(M)\curvearrowright{\cal T}$ in the sense of Morgan and Shalen if there is a sequence $\varepsilon_n\longrightarrow 0$ such that for any $a\in\pi_1(M)$, $\varepsilon_n\delta_{\rho_n}(a)\longrightarrow \delta_{\tau}(a)$.
In \cite{conti}, J.-P. Otal described, in the special case of handlebodies, the behavior of the length of measured geodesic laminations which are realized in ${\cal T}$. A careful look at the proof yields the following statement.

\begin{theorem}[Continuity Theorem \cite{conti}]	\label{contin}
Let $(\rho_n)$ be a sequence of discrete and faithful representations of $\pi_1(M)$ tending in the sense of Morgan and Shalen to a small minimal action of $\pi_1(M)$ on an $\R$-tree ${\cal T}$. Let $\varepsilon_n\longrightarrow 0$ be such that $\forall g\in\pi_1(M)$, $\varepsilon_n \delta_{\rho_n}(g)\longrightarrow \delta_{\cal T}(g)$ and let $L\subset\partial M$ be a geodesic lamination which is realized in ${\cal T}$. Then there exists a neighbourhood ${\cal V}(L )$ of $L$, and constants $K,n_0$ such that for any simple closed curve $c\subset {\cal V}(L )$ and for any $n\geq n_0$,
$$\varepsilon_n l_{\rho_n}(c^*)\geq K  l_{s_0}(c).$$
\end{theorem}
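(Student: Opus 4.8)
The plan is to realize a train track carrying $L$ in $\Hp^3$ via the representations $\rho_n$ and to estimate translation lengths through a broken--geodesic argument, converting the hypothesis that $L$ is realized in ${\cal T}$ into a definite ``stretching'' of the track for large $n$. First I would fix a train track $\tau$ carrying $L$ together with a regular neighbourhood ${\cal V}(L)$ of $L$ in $\partial M$ so that every simple closed curve $c\subset{\cal V}(L)$ is carried by $\tau$; for such a $c$ let $w_i(c)$ denote the number of times $c$ runs through the branch $b_i$. Since the geodesic representative of $c$ is shorter than its train--track route, one has $l_{s_0}(c)\leq C\sum_i w_i(c)$ for a constant $C=C(\tau,s_0)$, so it suffices to bound $\varepsilon_n l_{\rho_n}(c^*)$ below by a uniform multiple of $\sum_i w_i(c)$. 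Because $L$ is realized in ${\cal T}$, the equivariant map $\Hp^2\rightarrow{\cal T}$ restricts to a weak realization of $\tau$ in ${\cal T}$; using that $\tau$ has finitely many branches and switches I would extract a uniform lower bound $\delta_0>0$ on the ${\cal T}$--length of the image of each rail, and record that the disjoint--interiors condition at each switch makes the realized image of any $\tau$--route a geodesic of the tree, so that in ${\cal T}$ one already has $\delta_{\cal T}(c)\geq\delta_0\sum_i w_i(c)$.

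The heart of the argument is to transfer this bound to the $\rho_n$ \emph{uniformly in $c$}, since the convergence $\varepsilon_n\delta_{\rho_n}(g)\to\delta_{\cal T}(g)$ is only pointwise in $g$ and cannot be applied curve by curve. Viewing ${\cal T}$ as the limit of the rescaled spaces $(\Hp^3,\varepsilon_n d)$ in the equivariant Gromov--Hausdorff sense underlying Morgan--Shalen convergence, I would approximate the images of the branch points of $\tilde\tau$ (finitely many modulo $\pi_1(M)$) by equivariant families of points in $\Hp^3$ and interpolate by geodesics, obtaining equivariant maps $f_n:q^{-1}(\tau)\rightarrow\Hp^3$. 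By construction the $f_n$--image of each rail has length at least $\delta_0/2\varepsilon_n$ for $n$ large, and at each switch the additivity of distances in the tree (no backtracking) passes to the statement that the backtracking defect $\Delta_i=d(x_{i-1},x_i)+d(x_i,x_{i+1})-d(x_{i-1},x_{i+1})$ at consecutive branch images satisfies $\varepsilon_n\Delta_i\to 0$; dividing by the rail lengths, the ratio of defect to segment length tends to $0$ uniformly over the finitely many switch types.

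With these two facts I would invoke the local--to--global principle for quasi--geodesics in the $\delta$--hyperbolic space $\Hp^3$: for $n$ large enough that $\delta_0/2\varepsilon_n$ dominates $\delta$ and that each local defect lies below, say, one hundredth of the adjacent segment lengths, the $f_n$--image of the $\rho_n(c)$--invariant route is a $(\lambda,\epsilon)$--quasi--geodesic with constants independent of $c$ and of $n\geq n_0$. Consequently its image lies within bounded Hausdorff distance of the axis of $\rho_n(c)$, and the length traversed over one period, which is at least $\frac{\delta_0}{2\varepsilon_n}\sum_i w_i(c)$, is comparable to the translation length. This gives $\varepsilon_n l_{\rho_n}(c^*)=\varepsilon_n\delta_{\rho_n}(c)\geq K'\sum_i w_i(c)$ for a uniform constant $K'$, and combining with $l_{s_0}(c)\leq C\sum_i w_i(c)$ yields the claimed inequality with $K=K'/C$.

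I expect the main obstacle to be precisely this uniformity over $c$: everything must be reduced to the finitely many local configurations of $\tau$ so that the quasi--geodesic constants and the threshold $n_0$ do not depend on the curve. The two delicate points are (i) checking that the tree's non--backtracking translates into a defect that is a vanishing \emph{fraction} of the segment length, and not merely $o(1/\varepsilon_n)$ in absolute terms, which is exactly what makes the local--to--global principle applicable with uniform constants, and (ii) arranging that the approximating maps $f_n$ can be chosen equivariantly with the stretching of rails and the separation at switches controlled simultaneously at all switch types.
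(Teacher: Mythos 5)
The paper itself gives no proof of this theorem: it is quoted from Otal \cite{conti}, with the remark that ``a careful look at the proof'' of that paper yields the stated form. Your argument is essentially a reconstruction of that cited proof --- fix a carrying train track whose weak realization in ${\cal T}$ stretches every rail by a definite amount, transfer this to the rescaled spaces $(\Hp^3,\eps_n d)$ by equivariant (Morgan--Shalen/Gromov--Hausdorff) approximation of the finitely many orbits of switches, and conclude with a broken-geodesic local-to-global argument whose constants are uniform in $c$ --- so it matches the approach the paper relies on, and the two delicate points you flag (the defect being a vanishing fraction of the segment lengths, and equivariance of the approximating maps) are exactly the ones that make Otal's argument work.
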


In the preceding statement $s_0$ is a fixed complete hyperbolic metric on $\partial_{\chi<0} M$. Using this and Proposition \ref{reali}, we get the following

\begin{theorem}
Let $\rho_n$ be a sequence of faithful representations of $\pi_1(M)$ such that $\Hp^3/\rho_n(\pi_1(M))$ is homeomorphic to $int(M)$, let $\lambda\in{\cal D}(M)$ and let $\lambda_n$ be a sequence of measured geodesic laminations such that :
\begin{description}
\item - the sequence $\lambda_n$ converges to $\lambda$ in ${\cal ML}(\partial M)$;
\item - the sequence $|\lambda_n|$ converges to $|\lambda|$ in the Hausdorff topology;
\item - the sequence $l_{\rho_n}(\lambda_n)$ is bounded.
\end{description} 
Then $(\rho_n)$ contains a converging subsequence.
\end{theorem}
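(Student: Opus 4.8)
The plan is to argue by contradiction, combining the Morgan--Shalen compactification with Proposition \ref{reali} and the Continuity Theorem \ref{contin}. Suppose that $(\rho_n)$ contains no converging subsequence. Then, after passing to a subsequence, the construction of Morgan and Shalen (\cite{mors1}) produces a small minimal action of $\pi_1(M)$ on an $\R$-tree ${\cal T}$ together with a sequence $\varepsilon_n\longrightarrow 0$ such that $\varepsilon_n\delta_{\rho_n}(g)\longrightarrow\delta_{\cal T}(g)$ for every $g\in\pi_1(M)$; that is, $(\rho_n)$ tends to the action $\pi_1(M)\curvearrowright{\cal T}$ in the sense of Morgan and Shalen. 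This is the only place where the hypothesis ``no converging subsequence'' enters, and the remaining goal is to contradict it.

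Since $\lambda\in{\cal D}(M)$, I would apply Proposition \ref{reali} to this action: it guarantees that at least one connected component $\lambda^i$ of $\lambda$ is realized in ${\cal T}$. Writing $L=|\lambda^i|$ for its support, a geodesic lamination realized in ${\cal T}$, I feed $L$ into the Continuity Theorem \ref{contin}. This provides a neighbourhood ${\cal V}(L)$ of $L$ and constants $K>0$ and $n_0$ such that
$$\varepsilon_n l_{\rho_n}(c^*)\geq K\,l_{s_0}(c)$$
for every simple closed curve $c\subset{\cal V}(L)$ and every $n\geq n_0$, where $s_0$ is the fixed hyperbolic metric on $\partial_{\chi<0}M$.

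The next step is to localise the sequence $\lambda_n$ near $\lambda^i$ and sum this estimate. Shrinking ${\cal V}(L)$ so that it meets $|\lambda|$ only in $L$, the Hausdorff convergence $|\lambda_n|\to|\lambda|$ ensures that, for $n$ large, every leaf of $\lambda_n$ that enters ${\cal V}(L)$ lies entirely in ${\cal V}(L)$; call the corresponding weighted sub-multi-curve $\lambda_n'$. Weak$^*$ convergence $\lambda_n\to\lambda$ in ${\cal ML}(\partial M)$, together with the continuity of the length functional, then gives $l_{s_0}(\lambda_n')\longrightarrow l_{s_0}(\lambda^i)>0$, so $l_{s_0}(\lambda_n')\geq\frac{1}{2}l_{s_0}(\lambda^i)$ for $n$ large. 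Summing the inequality of Theorem \ref{contin} over the leaves of $\lambda_n'$ (weighted by their masses) yields, for $n$ large,
$$\varepsilon_n l_{\rho_n}(\lambda_n)\geq\varepsilon_n l_{\rho_n}(\lambda_n')\geq K\,l_{s_0}(\lambda_n')\geq\frac{K}{2}\,l_{s_0}(\lambda^i)>0.$$
On the other hand $l_{\rho_n}(\lambda_n)$ is bounded by hypothesis while $\varepsilon_n\longrightarrow 0$, so the left-hand side tends to $0$. This contradiction shows that $(\rho_n)$ must contain a converging subsequence.

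The substantial content of this argument has already been placed in Proposition \ref{reali} (whose proof rests on the train-track realization of \cite{boches}) and in the Continuity Theorem; given those, the final deduction is short. The main point demanding care is the localisation step: one must be certain that the portion of $\lambda_n$ carrying a definite amount of $s_0$-length genuinely sits inside the neighbourhood ${\cal V}(L)$ on which the continuity estimate is valid. This is exactly what the Hausdorff convergence $|\lambda_n|\to|\lambda|$ provides; without it the lower bound could escape ${\cal V}(L)$ and the contradiction would collapse. I would therefore be explicit about choosing ${\cal V}(L)$ disjoint from the other components of $\lambda$ and about the fact that, for large $n$, whole leaves of $\lambda_n$ (not merely arcs) accumulate on $L$.
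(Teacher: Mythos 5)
Your strategy is exactly the paper's: argue by contradiction, pass to a Morgan--Shalen limit action $\pi_1(M)\curvearrowright{\cal T}$, invoke Proposition \ref{reali} to realize a component $\lambda^i$ of $\lambda$ in ${\cal T}$, and contradict the length bound via the Continuity Theorem \ref{contin}. You are in fact more careful than the paper on two points: you correctly use that Proposition \ref{reali} only furnishes \emph{one} realized component (the paper's text says ``$\lambda$ is realized''), and your localisation step --- choosing ${\cal V}(L)$ to isolate $\lambda^i$ from the other components of $\lambda$ and using Hausdorff convergence to keep whole leaves of $\lambda_n$ inside ${\cal V}(L)$ --- fills in a deduction the paper compresses into the single clause ``it follows from Theorem \ref{contin} that $l_{\rho_n}(\gamma_n)\longrightarrow\infty$''.

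There is, however, one genuine gap: throughout the estimate you treat the $\lambda_n$ as weighted multi-curves. You speak of the ``weighted sub-multi-curve'' $\lambda_n'$, of its leaves and their masses, and you sum the inequality of Theorem \ref{contin} over those leaves. But the hypothesis only says the $\lambda_n$ are measured geodesic laminations; they may perfectly well be irrational, in which case $\lambda_n'$ has no closed leaves at all, and Theorem \ref{contin} --- whose estimate applies only to simple closed curves $c\subset{\cal V}(L)$ --- gives you nothing to sum. The paper disposes of this in its opening sentence: approximate each $\lambda_n$ by a weighted multi-curve $\gamma_n$, chosen (for each fixed $n$, using density of weighted multi-curves in ${\cal ML}(\partial M)$ and continuity of $l_{\rho_n}$) so close that the diagonal sequence $(\gamma_n)$ still satisfies the three hypotheses: $\gamma_n\rightarrow\lambda$ in ${\cal ML}(\partial M)$, $|\gamma_n|\rightarrow|\lambda|$ in the Hausdorff topology, and $l_{\rho_n}(\gamma_n)$ bounded. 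With that reduction inserted at the start, your argument goes through verbatim on the $\gamma_n$ and coincides with the paper's proof; without it, the summation step is not even defined in the stated generality.
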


\begin{proof}
Approximating each $\lambda_n$ by weighted multi-curves, we produce a sequence of multi-curves also satisfying the hypothesis of the theorem. Let us assume that $(\rho_n)$ does not contain an algebraically converging subsequence, then by \cite{mors1}, a subsequence of $(\rho_n)$ tends to a small minimal action of $\pi_1(M)$ on an $\R$-tree ${\cal T}$.  By Proposition \ref{reali}, $\lambda$ is realized in ${\cal T}$ and it follows from Theorem \ref{contin} that $l_{\rho_n}(\gamma_n)\longrightarrow\infty$ giving us the desired contradiction.
\end{proof}

\begin{rem}
When $M$ is an $I$-bundle over a closed surface, the proof of this theorem can be found in \cite{thui}; this result has been extended to manifolds with incompressible boundary in \cite{ohsh2}. When $M$ is a compression body and $\lambda\in\hat{\cal O}$, this result has been proved in \cite{boches} and \cite{boches1}.
\end{rem}

\section{Conclusion}

To complete this paper, we should also mention the action of $Mod(M)$ on ${\cal D}(M)$. The following result is proved in \cite{fini} using some properness properties of the bending map. The proof of these properties is long and is subject of \cite{proper}. Here we will only give an outline of the proof, the reader interested in a complete proof should refer to \cite{fini} or to \cite{proper}.  

\begin{proposition}	\label{un}
If $M$ is not a genus $2$ handlebody, the action of $Mod(M)$ on ${\cal D}(M)$ is properly discontinuous.
\end{proposition}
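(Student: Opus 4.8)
The plan is to transport the question from ${\cal D}(M)$ to the geometrically finite deformation space, where $Mod(M)$ already acts properly discontinuously, by means of the bending map. Recall from Section~3 that the bending map $b\colon{\cal GF}(M)\rightarrow{\cal ML}(\partial M)$ has image ${\cal P}(M)$ and is $Mod(M)$-equivariant, in the sense that $b(g\cdot(\rho,h))=g\cdot b(\rho,h)$ for every $g\in Mod(M)$ (the bending lamination is natural under remarking). By Ahlfors--Bers (\cite{bers}), ${\cal GF}(M)$ is parametrised by Teichm\"uller-type data on $\partial_{\chi<0}M$, and $Mod(M)$ acts on it properly discontinuously through its (finite-kernel) image in $Mod(\partial_{\chi<0}M)$, since the mapping class group of a surface acts properly discontinuously on its Teichm\"uller space. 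The goal is then to realise the $Mod(M)$-action on ${\cal D}(M)$, up to the controlled construction of Lemma~\ref{dedan}, as a pullback of this action.

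Before doing so I would observe that under our hypothesis the exceptional condition $(-)$ never occurs. Indeed, an orientable $I$-bundle over a pair of pants $P$ must be the trivial bundle $P\times I$ (a twisted $I$-bundle over an orientable base has non-orientable total space), and $P\times I$ has $\chi=-1$, so it is a genus $2$ handlebody. Since $M$ is assumed not to be a genus $2$ handlebody, no $\lambda\in{\cal D}(M)$ satisfies $(-)$, and Lemmas~\ref{dedan} and \ref{supcond} therefore apply to every lamination of ${\cal D}(M)$.

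Now let $K\subset{\cal D}(M)$ be compact; I must show that $\{g\in Mod(M)\mid gK\cap K\neq\emptyset\}$ is finite. By the argument proving that ${\cal D}(M)$ is open (via Lemma~\ref{supcond}), the infimum $\eta_0$ of $i(\lambda,\partial E)$ over $\lambda\in K$ and over all essential annuli or discs $E$ is strictly positive: otherwise a Hausdorff limit of the $\partial E_n$ would be annular or would contain a homoclinic leaf not crossing the limit of the $\lambda_n\in K$, contradicting Lemma~\ref{supcond}. Applying the construction of Lemma~\ref{dedan} with this uniform constant (scaling by $\frac{2\pi}{\eta_0}$ and truncating the weights of the leaves above $\pi$), I associate to each $\lambda\in K$ a lamination $\alpha_\lambda\in{\cal P}(M)$ with the same support, and these remain in a fixed compact subset ${\cal A}\subset{\cal P}(M)$. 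Because $Mod(M)$ preserves intersection numbers and permutes essential annuli and discs, the constant $\eta_0$ can be taken invariant and the construction is equivariant: $\alpha_{g\lambda}=g\cdot\alpha_\lambda$. Choosing for each $\lambda$ a structure $\sigma_\lambda\in b^{-1}(\alpha_\lambda)$, the properness of the bending map established in \cite{proper} forces all the $\sigma_\lambda$ into a single compact subset $C\subset{\cal GF}(M)$. If $g$ satisfies $gK\cap K\neq\emptyset$, pick $\lambda\in K$ with $g\lambda\in K$; then $b(g\cdot\sigma_\lambda)=g\cdot\alpha_\lambda=\alpha_{g\lambda}\in{\cal A}$, so by properness $g\cdot\sigma_\lambda\in C$ as well. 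Thus $g$ carries the point $\sigma_\lambda\in C$ into $C$, so $gC\cap C\neq\emptyset$; proper discontinuity of the $Mod(M)$-action on ${\cal GF}(M)$ then bounds the number of such $g$, giving the desired finiteness.

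The main obstacle is exactly the properness of the bending map, namely that $b^{-1}$ of a compact set of measured laminations stays in a compact part of ${\cal GF}(M)$; this is the technical core and is precisely the content of \cite{proper}. A secondary point to handle with care is that the weight-truncation in Lemma~\ref{dedan} is not continuous in $\lambda$, so one should not expect $\lambda\mapsto\alpha_\lambda$ to be continuous — it suffices, and must be checked, that its image lies in a compact subset of ${\cal P}(M)$, which follows from the uniform bounds on $K$. Finally, I would record why the genus $2$ handlebody is genuinely excluded rather than merely for convenience: there the condition-$(-)$ laminations, supported on a section of the bundle over $\partial P$, are fixed by the infinite group generated by the Dehn twists along the vertical annuli $\alpha\times I$ (with $\alpha\subset P$ an essential curve), hence have infinite stabiliser, and the action cannot be properly discontinuous.
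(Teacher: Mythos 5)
Your proof follows essentially the same route as the paper's outline: reduce to ${\cal P}(M)$ via the scaling-and-truncation of Lemma \ref{dedan}, lift through the bending map, use the properness of that map (the black box from \cite{proper}), and conclude by proper discontinuity of the $Mod(M)$-action on marked geometrically finite structures. Your repackaging (a direct argument on a compact $K$ with an equivariant section of $b$, instead of the paper's sequential argument by contradiction) and your two side observations --- that condition $(-)$ forces $M$ to be the trivial bundle $P\times I$, hence a genus $2$ handlebody, so $(-)$ never arises under the hypothesis, and that in the excluded case the vertical twists give the condition-$(-)$ laminations infinite stabilizer --- are correct and make explicit what the paper leaves implicit.

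However, one step is false as stated, and it is precisely the point the paper's outline treats with care. You claim that the image of $K$ under $\lambda\mapsto\alpha_\lambda$ lies in a compact subset of ${\cal P}(M)$. It does not in general: the image is precompact in ${\cal ML}(\partial M)$, but since weight-truncation is discontinuous its closure can leave ${\cal P}(M)$. Concretely, if $\lambda_n\in K$ are atom-free and converge to $\lambda_0\in K$ having a closed leaf whose $\frac{2\pi}{\eta_0}$-weight exceeds $\pi$, then $\alpha_{\lambda_n}=\frac{2\pi}{\eta_0}\lambda_n$ converges to $\frac{2\pi}{\eta_0}\lambda_0$, which violates condition $a)$; any compact set containing the image must contain this limit, so no compact subset of ${\cal P}(M)$ will do (the strict inequality in condition $c)$ can likewise degenerate to equality in the limit). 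Consequently you may not quote properness of $b$ ``over a compact subset of ${\cal P}(M)$''. This is exactly why the paper truncates the heavy leaves of $\lambda$ and of the approximating $\lambda_n$ consistently, and then remarks that the limit $\lambda'_\infty$ may still have leaves of weight greater than $\pi$: the properness statement actually available from \cite{proper} and \cite{espoir} concerns sequences of geometrically finite structures whose bending laminations converge to a lamination that is merely doubly incompressible, i.e.\ lies in ${\cal D}(M)$, not necessarily in ${\cal P}(M)$. Your argument is repaired by invoking it in that form, after noting that the closure of your family is a compact subset of ${\cal D}(M)$: one has $\alpha_\lambda\geq\min\bigl(1,\pi/W\bigr)\frac{2\pi}{\eta_0}\lambda$ as measures, where $W$ bounds the atom weights of $\frac{2\pi}{\eta_0}\lambda$ over $K$, so every limit has intersection at least $2\pi\min\bigl(1,\pi/W\bigr)$ with every $\partial E$. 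A smaller caveat of the same nature: your Ahlfors--Bers argument for proper discontinuity on ${\cal GF}(M)$ is complete only on ${\cal CC}(M)$, while the structures $\sigma_\lambda$ you produce may be cusped (this is what weight-$\pi$ leaves create), so on the full stratified ${\cal GF}(M)$ you still need the treatment the paper defers to \cite{proper}.
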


\begin{outline}
Here $Mod(M)$ is the group of isotopy classes of diffeomorphisms $M\rightarrow M$.\\
\indent
Let us assume that Proposition \ref{un} is not true. There are measured geodesic laminations $\lambda\in{\cal D}(M)$, $(\lambda_n)\in {\cal D}(M)$ and diffeomorphisms $(\phi_n)\in Mod(M)$ such that $(\lambda_n)$ and  $(\phi_n(\lambda_n))$ converge to $\lambda$ in ${\cal ML}(\partial M)$ and that for any $n\neq m$, $\phi_n$ is not isotopic to $\phi_m$. Since $\lambda\in{\cal D}(M)$, $\exists \eta>0$ such that $i(\lambda,\partial D)>\eta$ for any essential disc $D$. Let $\frac{2\pi}{\eta} \lambda$ be the measured geodesic lamination obtained by rescaling the measure of  $\lambda$ by $\frac{2\pi}{\eta}$. Let $\lambda^i$ be a compact leaf of $\frac{2\pi}{\eta} \lambda$ with a weight greater than or equal to $\pi$; if, up to extracting a subsequence, $\lambda^i$ is a compact leaf of all the measured geodesic laminations $\lambda_n$, let us replace, in $\frac{2\pi}{\eta} \lambda$ and in all $\frac{2\pi}{\eta} \lambda_n$,  $\lambda^i$ by a the same leaf with weight $\pi$. Let $\lambda'_\infty$ and $\lambda'_n$ be the measured geodesic laminations obtained by doing the same for all the leaves of $\frac{2\pi}{\eta} \lambda$ with a weight greater than $\pi$; let us remark that $\lambda'_\infty$ may have some leaves with a weight greater than $\pi$ but that for $n$ large enough, the compact leaves of $\lambda'_n$ have a weight less than or equal to $\pi$. Let us also remark that $(\lambda'_n)$ and $(\phi_n(\lambda'_n))$ converge to $\lambda'_\infty$ in ${\cal ML}(\partial M)$. By Lemma \ref{dedan}, $\lambda'_\infty$ and $\lambda'_n$ satisfy the conditions $b)$, $c)$. For $n$ large enough, the $\lambda'_n$ also satisfy the condition $a)$ hence,  by \cite{espoir} (see also \cite{fini}), there is a geometrically finite metric $\rho_n$ on the interior of $M$ whose bending measured lamination is $(\lambda'_n)$; here a geometrically finite metric is a geometrically finite representation $\rho:\pi_1(M)\rightarrow Isom(\Hp^3)$ together with an isotopy class of homeomorphisms $M\rightarrow N^{ep}$. The bending measured geodesic lamination of $\phi_{n*}(\rho_n)$ is $\phi_n(\lambda'_n)$ and by construction $\phi_n(\lambda'_n)\longrightarrow\lambda'_\infty$. It is at this point that we need the properness property of the bending map mentioned before the statement of Proposition \ref{un} : it follows from \cite{espoir} that  there  is a subsequence such that $(\rho_n)$ and $(\phi_{n*}(\rho_n))$ converge to some geometrically finite metrics.\\
\indent
The conclusion comes from the fact that the action of $Mod(M)$ on the space of isotopy classes of geometrically finite metrics (see \cite{proper} for a definition) on the interior of $M$ is properly discontinuous. This fact can be shown by using the arguments of the proof of the properness properties mentioned above (cf. \cite{proper}).
\end{outline}

\indent
As has been mentioned throughout this paper, almost all the above results have been already proved when $\lambda\in\hat{\cal O}$. In an attempt to convince the reader of the interest of this paper we will give some examples of laminations lying in ${\cal D}$ but not in $\hat{\cal O}$.\\
\indent
Let $M$ be an $I$-bundle over a compact surface $S$ with boundary; this manifold $M$ is a handlebody. Let $(\gamma,\alpha)\in{\cal ML}(S)$ be a pair of binding measured geodesic laminations, namely for any measured geodesic lamination $\beta\in {\cal ML}(S)$,\linebreak $i(\beta,\gamma)+i(\beta,\alpha)>0$. Such a pair  of binding measured geodesic laminations has the following property : $\exists\eta>0$ such that $i(c,\gamma)+i(c,\alpha)\geq\eta$ for any closed curve $c\subset S$. Let us defined a measured geodesic lamination $\lambda\in{\cal ML}(\partial M)$ as follows : on one component $\{0\}\times S$ of $\partial I\times S$, $\lambda\cap (\{0\}\times S)$ is $\gamma$, on the other component, $\lambda\cap (\{1\}\times S)$ is $\alpha$ and on the remaining part $I\times \partial S$ of the boundary, $\lambda\cap (I\times \partial S)$ is $\{p\}\times\partial S$ for some $p\in ]0,1[$ endowed with a Dirac mass $\eta$.\\
\indent
For any essential disc $D\subset M$, $\partial D$ intersects $\{p\}\times\partial S$, hence $i(\partial D,\lambda)\geq\eta$. If $A$ is an essential annulus, either $\partial A$ intersects $\{p\}\times\partial S$ and $i(\partial A,\lambda)\geq\eta$, or $A$ can be homotoped to a vertical annulus $c\times I\subset I\times S$ with $c$ being a simple closed curve. In the second case, we have $i(\partial A,\lambda)=i(c,\gamma)+i(c,\alpha)\geq\eta$. We have thus proved that $\lambda\in{\cal D}(M)$. By \cite{boches} the measured geodesic laminations $\lambda\cap\{0\}\times S$ and  $\lambda\cap\{1\}\times S$ have the same supports as some measured laminations lying in ${\cal M}'$ hence $\lambda\not\in\hat{\cal O}$.

\end{document}